\newtheorem{Df}{Definition}
\newtheorem{theorem}[Df]{Theorem}
\newtheorem{prop}[Df]{Proposition}
\newtheorem{lemma}[Df]{Lemma}
\newtheorem{remark}[Df]{Remark}
\newtheorem{corollary}[Df]{Corollary}
\numberwithin{Df}{section}
\numberwithin{equation}{section}
\newcommand{\tcoev}{\stackrel{\longrightarrow}{\operatorname{coev}}}
\newcommand{\tev}{\stackrel{\longrightarrow}{\operatorname{ev}}}
\newcommand{\ev}{\stackrel{\longleftarrow}{\operatorname{ev}}}
\newcommand{\coev}{\stackrel{\longleftarrow}{\operatorname{coev}}}
\newcommand{\Obj}{\operatorname{Obj}}
\newcommand{\brk}[1]{{\left\langle#1\right\rangle}}
\newcommand{\C}{\ensuremath{\mathbb{C}}}
\newcommand{\cat}{\mathscr{C}}
\newcommand{\ideal}{I}
\newcommand{\ob}{Ob(\mathcal{C})}
\newcommand{\End}{\operatorname{End}}
\newcommand{\Hom}{\operatorname{Hom}}
\newcommand{\tr}{\operatorname{tr}}
\newcommand{\Id}{\operatorname{Id}}
\newcommand{\kt}{$\Bbbk$\nobreakdash-\hspace{0pt}}
\newcommand{\kk}{\Bbbk}
\newcommand{\mt}{\operatorname{\mathsf{t}}}
\newcommand{\Proj}{\ensuremath{\mathcal{P}roj}}
\newcommand{\fg}{\ensuremath{\mathfrak{g}}}
\newcommand{\fh}{\ensuremath{\mathfrak{h}}}
\newcommand{\fb}{\ensuremath{\mathfrak{b}}}
\newcommand{\unit}{\ensuremath{\mathds{1}}}
\newcommand{\FK}{\kk}
\newcommand{\lega}{{\ensuremath{\color{RubineRed}\alpha}}}
\newcommand{\legb}{{\ensuremath{\color{RubineRed}\beta}}}
\newcommand{\PP}{{\ensuremath{{\color{RubineRed}\mathrm{P}}}}}
\renewcommand{\ss}{{\ensuremath{\color{RubineRed}\sigma}}}
\renewcommand{\tt}{{\ensuremath{\color{RubineRed}\tau}}}
\newcounter{exo} \newcounter{numexercice}
\renewcommand{\theexo}{\arabic{exo}}
\begin{document}
\title{M-traces in (non-unimodular)  pivotal categories}

\author{Nathan Geer}
\address{Mathematics \& Statistics\\
  Utah State University \\
  Logan, Utah 84322, USA}
\thanks{Research of the first author has been partially supported by
  the NSF grants DMS-1452093 and DMS-1664387.   }
\email{nathan.geer@usu.edu}
\author{Jonathan Kujawa}
\address{Mathematics Department\\
University of Oklahoma\\
Norman, OK 73019}
\thanks{Research of the second author was partially supported by a Simons Foundation Collaboration Grant.}\
\email{kujawa@math.ou.edu}
\author{Bertrand Patureau-Mirand}
\address{LMBA, CNRS UMR 6205, Universit\'e de Bretagne-Sud, F-56000 Vannes, France }
\email{bertrand.patureau@univ-ubs.fr}
\thanks{Corresponding Author: Jonathan Kujawa.}\
\date{\today}

\begin{abstract} 
We generalize the notion of a modified trace (or m-trace) to the setting of non-unimodular categories.  
M-traces are known to play an important role in low-dimensional topology and representation theory, as well as in studying the category itself.   Under mild conditions we give existence and uniqueness results for m-traces in pivotal categories.  
 \end{abstract}
\maketitle
\setcounter{tocdepth}{1}

\section{Introduction}

 \subsection{Background}

Tensor categories with compatible left and right duals have a notion of the categorical trace of a morphism.  These traces and the corresponding concept of dimension are a key tool in applications to low-dimensional topology, representation theory, and other fields.  However, it is often the case that categories of interest are not semisimple, the categorical traces vanish, and these constructions become trivial. In the past decade it became clear there exist nontrivial replacements for trace functions on non-semisimple ribbon and, more generally, pivotal categories (e.g., see \cite{GKP1,GKP2,GPV}).  We call these replacements \emph{modified traces} or \emph{m-traces}, for short. The study of m-traces leads to new, interesting quantum invariants of links and 3-manifolds as well as applications in  representation theory, Hopf algebras, Deligne categories, logarithmic conformal field theory, and other fields (e.g., see \cite{CMR,CG,DGP,GPT,CGP,CK,C,CH,BKN4,BCGP,BBG,BBGa,AS,Rup,Mur1,Lent,Ha}). 
However, until now the theory of m-traces was limited to unimodular categories (i.e., categories in which the projective cover and injective hull of the unit object coincide).  The primary goal of this paper is to generalize m-traces to the non-unimodular setting.  

 \subsection{Statement of main results}\label{SS:IntroStatMainResults} 
In what follows we highlight the main results.  For simplicity's 
sake, in the introduction we assume $\kk$ is an algebraically closed field and $\cat$ is a
pivotal, $\kk$-linear, locally-finite, tensor category.  Roughly speaking, this is a category with a tensor product, duals, and the morphism sets are finite-dimensional $\kk$-vector spaces.  See Section~\ref{S:Prelims} for precise definitions.   Such categories are ubiquitous.  For example, they appear:

\begin{itemize}
\item  as categories of finite-dimensional modules for finite-dimensional pivotal (quasi-)Hopf algebras;
\item  in the study of logarithmic conformal field theories (e.g., see \cite{Gab,TW});
\item  as fusion categories of categorical dimension zero (e.g., see \cite{EGNO}).
\end{itemize}
See Section~\ref{S:Examples} and \cite{GKP2} for specific examples of such categories.

 Given a fixed pair of objects $\lega$ and $\legb$ in $\cat$ 
 and a right ideal $\ideal$ (a certain kind of full subcategory),
  we define the notion of a right $\left(\lega , \legb \right)$-trace 
 indexed by the objects of $\ideal$. This is a particular kind of m-trace given by a family of $\kk$-linear functions,
$$\{\mt_V:\Hom_\cat(\lega\otimes V,\legb\otimes V)\rightarrow \FK \}_{V \in \ideal},$$
where $V$ runs over all objects of $\ideal$, 
and such that certain partial trace and cyclicity properties hold.  See Section~\ref{SS:traces} for a precise definition. In the case when $\lega$ and $\legb$ are both the unit object of $\cat$, then we recover the
unimodular m-traces of \cite{GKP1,GKP2,GPV}.

Our first main result is Theorem~\ref{T:ExistOfabtrace} in which we show, given an absolutely indecomposable object $\PP$ and objects $\lega$ and $\legb$ in $\cat$ with $\Hom_{\cat}(\lega , \PP)=\kk$ and $\Hom_{\cat}(\PP , \legb)=\kk$,  there exists a right $\left(\lega , \legb \right)$-trace on a certain 
 (possibly empty) right ideal 
$\ideal_{\lega}^{\legb}$. Furthermore, 
if either $\lega$ or $\legb$ is the unit object, then 
 Theorem~\ref{T:UniqunessofTrace} implies  this 
m-trace 
is unique up to scaling. 

Because of the generality of these existence and uniqueness results it is difficult to explicitly describe the ideal $\ideal_{\lega}^{\legb}$ or the functions $\mt_{V}$.
   However, there is a notable case where we can say more. 
 If $\cat$ is an abelian category with enough projectives and
 $\PP$ is assumed to be the projective cover of the unit object, $\unit$, and if $\lega$ denotes a simple subobject of $\PP$, then our second main result shows the above theorem defines a unique, nontrivial, right $(\lega, \unit )$-trace on $\Proj (\cat)$, the full subcategory of projective objects of $\cat$.  
This is already interesting and powerful in the context of unimodular categories (i.e.\ when $\lega \cong \unit$).  It says any locally-finite, unimodular, pivotal category $\cat$  with enough projectives has an m-trace on $\Proj (\cat)$.  Previously this was only known in special cases, such as when $\cat$ is the category of representations for a finite group or when $\cat$ contains a simple projective object. For example, see \cite{BBGa,GKP2,GR}.

We end the paper with a discussion of how the notion of a right $(\lega , \legb )$-trace on a category leads to a natural generalization of the notion of a Calabi-Yau category.  Variations on Calabi-Yau categories play an important role in mathematical physics, algebraic geometry, the representation theory of finite-dimensional algebras, and the categorification of cluster algebras (e.g., see \cite{Costello,Keller,KR,KS} and references therein).

If $F,G$ are endofunctors of $\cat$, then we say $\cat$ is an $(F,G)$-twisted Calabi-Yau category if for all objects $U$ and $V$ there is a vector space isomorphism
\[
 \Hom_{\cat}(F(U),V) \cong \Hom_{\cat}(V,G(U))^{*}, 
\] which is functorial in both $U$ and $V$.     For example, an $(\Id_{\cat}, G)$-twisted Calabi-Yau structure on $\cat$ amounts to saying $G$ is a right Serre functor in the sense of Bondal-Kapranov \cite{BK}.  Just as a Calabi-Yau category is a categorical generalization of the notion of a symmetric Frobenius algebra, an $(F,G)$-twisted Calabi-Yau category generalizes the notion of an arbitrary Frobenius algebra. 

Our main theorem applied to $\Proj (\cat)$ can be reformulated as saying there is a twisted Calabi-Yau structure on $\Proj (\cat)$ for any pivotal, $\kk$-linear, locally-finite tensor category $\cat$. 
 As a consequence, $\Proj (\cat)$ admits a right Serre functor for any such category.  
See Theorem~\ref{T:twistedKYonProjforlocallyfinite}. As another consequence, every finite, $\kk$-linear, pivotal tensor category is equivalent to the category of finite-dimensional modules over a finite-dimensional Frobenius algebra.

\subsection{Applications}
One motivation for the development of m-traces is their use in constructing invariants in low-dimensional topology.  In particular, the second two  authors with Costantino and Turaev define generalized Kuperberg and Turaev-Viro invariants from certain unimodular pivotal tensor categories \cite{CGPT}.  One of the main ingredients in this construction is the existence of a non-degenerate m-trace.  This is one motivation for Theorem \ref{T:nondegeneratepairing} which says, under mild conditions, a right m-trace is always non-degenerate. This work is still within the context of unimodular categories.   An interesting future line of research is to use 
the m-traces of this paper
to construct generalized Kuperberg and Turaev-Viro invariants from non-unimodular pivotal tensor categories.

\subsection{Related work}
While working on this paper we learned 
A.\ Fontalvo Orozco and A.~M.\ Gainutdinov
were also defining a similar notion of m-traces with different techniques,  see \cite{FOG}. 
Their work generalizes the relation between the theory of integrals
for a Hopf algebra, $H$, with the m-trace on the
projective ideal $\Proj (H\text{-}\operatorname{mod})$ as established
in \cite{BBGa} in the unimodular case.

Shimizu recently introduced the notion of integrals for finite tensor categories \cite{Shim1,Shim2}.  It would be interesting  to generalize the results of \cite{BBGa} and Orozco-Gainutdinov to the categorical setting by relating the m-traces introduced here to the integrals of Shimizu.

\subsection{Acknowledgements}  The authors are pleased to thank the anonymous referee for  helpful comments.

\section{Preliminaries}\label{S:Prelims}

\subsection{Pivotal categories} 
We recall the definition of a pivotal tensor category, see for instance,
\cite{BW}. A \emph{tensor category} $\cat$ is a category equipped with a
covariant bifunctor $\otimes :\cat \times \cat\rightarrow \cat$ called the
tensor product, an associativity constraint, a unit object $\unit$, and left
and right unit constraints such that the Triangle and Pentagon Axioms hold.
When the associativity constraint and the left and right unit constraints are
all identities we say $\cat$ is a \emph{strict} tensor category. 
By Mac Lane's coherence theorem for pivotal tensor categories, every such category is equivalent (as a pivotal tensor category) to a strict one  (e.g., see \cite[Theorem 2.2]{NS}).  
However, when the results of this paper are applied in \cite{CGPT} the pivotal structure (in particular, the isomorphism in \eqref{E:DefphiV}) plays an important role.  For this reason we allow categories to have a nontrivial pivotal structure but are otherwise assumed to be strict.
The interested reader will easily extend all results to
arbitrary tensor categories or the reader may instead assume all categories are strict.  
In what follows we adopt the convention that $fg$ will denote the composition of morphisms $f \circ g$.

A strict tensor category $\cat$ has a \emph{left duality} if for each object
$V$ of $\cat$ there is an object $V^*$ of $\cat$ and morphisms
\begin{equation}\label{lele}
  \coev_{V} : \:\:   \unit \rightarrow V\otimes V^{*} \quad {\rm {and}} \quad
   \ev_{V}: \:\:
  V^*\otimes V\rightarrow \unit
\end{equation}
such that
\begin{align*}
  (\Id_V\otimes \ev_V)(\coev_V \otimes \Id_V)&=\Id_V & & {\rm {and}} &
  (\ev_V\otimes \Id_{V^*})(\Id_{V^*}\otimes \coev_V)&=\Id_{V^*}.
\end{align*}
A left duality determines for every morphism $f:V\to W$ in $\cat$ the dual (or 
transpose) morphism $f^*:W^*\rightarrow V^*$ by
$$
f^*=(\ev_W \otimes \Id_{V^*})(\Id_{W^*} \otimes f \otimes
\Id_{V^*})(\Id_{W^*}\otimes \coev_V),
$$
and determines for any objects $V,W$ of $\cat$, an isomorphism
$\gamma_{V,W}: W^*\otimes V^* \rightarrow (V\otimes W)^*$ by
$$
\gamma_{V,W} = (\ev_W\otimes \Id_{(V\otimes W)^*})(\Id_{W^*} \otimes \ev_V \otimes
\Id_W \otimes \Id_{(V\otimes W)^*})(\Id_{W^*}\otimes \Id_{V^*} \otimes
\coev_{V\otimes W}).
$$

Similarly, $\cat$ has a \emph{right duality} if for each object $V$ of $\cat$
there is an object $ V^\bullet$ of $\cat$ and morphisms
\begin{equation}\label{roro}
  \tcoev_{V} : \:\:   \unit\rightarrow V^\bullet\otimes V \quad {\rm {and}} \quad
  \tev_{V}:\:\:   V\otimes V^\bullet\rightarrow \unit
\end{equation}
such that
\begin{align*}
  (\Id_{V^\bullet}\otimes \tev_V)(\tcoev_V \otimes \Id_{V^\bullet})&=\Id_{V^\bullet}
  & & {\rm {and}} & (\tev_V\otimes \Id_{V})(\Id_{V}\otimes \tcoev_V)&=\Id_{V}.
\end{align*}
The right duality determines for every morphism $f:V\to W$ in $\cat$ the dual
morphism $f^\bullet:W^\bullet\rightarrow V^\bullet$ by
$$
f^\bullet=(\Id_{V^\bullet} \otimes \tev_W ) (\Id_{V^\bullet} \otimes f \otimes
\Id_{W^\bullet})( \tcoev_V \otimes \Id_{W^\bullet}),
$$
and determines for any objects $V,W$, an isomorphism $\gamma'_{V,W}:
W^\bullet\otimes V^\bullet \rightarrow (V\otimes W)^\bullet$ by
$$
\gamma'_{V,W}
= ( \Id_{(V\otimes W)^\bullet} \otimes \tev_V )(\Id_{(V\otimes W)^\bullet}
\otimes \Id_{V} \otimes \tev_W \otimes \Id_{V^\bullet} )(\tcoev_{V\otimes W}\otimes
\Id_{W^\bullet}\otimes \Id_{V^\bullet}).
$$

A \emph{pivotal category} is a tensor category with left duality
$\{{\coev_V}, \ev_V\}_{V \in \cat} $ and right duality
$\{{\tcoev_V}, \tev_V\}_{V \in \cat} $ which are compatible in the
sense that $V^*=V^\bullet$, $f^*=f^\bullet$, and
$\gamma_{V,W}=\gamma'_{V,W}$ for all $V, W, f$ as above.  Every
pivotal category has natural tensor isomorphisms
\begin{equation}\label{E:DefphiV}
\phi=\{\phi_V=(\tev_{V}\otimes\Id_{V^{**}})(\Id_V\otimes\coev_{V^{*}})\colon V\to
V^{**}\}_{V \in \cat}.
\end{equation}
  As we said, we will assume without loss of generality that
  $\gamma_{V,W}=\Id_{V,W}$ but choose to keep $\phi$ non-trivial as this will be useful when the results of this paper are applied to the category of finite-dimensional representations
  of a non-involutive pivotal Hopf algebra in \cite{CGPT}.

If $\cat$ is a pivotal category, then the tensor functor is exact in
both variables \cite[Propostion 4.2.1]{EGNO}.

 We remind the reader of the existence of a diagrammatic calculus for pivotal tensor categories, see for example \cite[Chapter XIV]{Kas} or \cite{GPV}.  
For brevity's sake we choose to not use it here. 

\subsection{Tensor categories}
Let $\kk$ be a commutative ring.
A \emph{tensor \kt category} is a tensor category $\cat$ which is enriched over the category of $\kk$-modules.  That is, 
$\cat$ is additive, 
the hom-sets of $\cat$ are left $\kk$-modules, and the composition and tensor product of morphisms are $\kk$-bilinear.

An object $V$ of a tensor \kt category $\cat$ is \emph{absolutely irreducible} if $\End_\cat(V)$ is a free $\kk$-module of rank one%
; that is, if the
ring homomorphism
$\kk \to
\End_\cat(X),\, k \mapsto k\, \Id_X$ is an isomorphism.  We identify
$\End_\cat(V)$ and $\kk$ via this map.  We always assume the unit
object, $\unit$, is absolutely irreducible.

We call an object $V$ of $\cat$ \emph{absolutely indecomposable} if 
\[
\End_{\cat}(V)/\operatorname{J}(\End_{\cat}(V)) \cong \FK.
\] 
Here $\operatorname{J}(\End_{\cat}(V))$ denotes the Jacobson radical of the endomorphism ring $\End_{\cat}(V)$. We say an absolutely indecomposable object is \emph{end-nilpotent} if the  Jacobson radical of its endomorphism algebra is nilpotent. 

Assume $\cat$ is abelian.  A simple object is one with no nontrivial subobjects.   An indecomposable object is one which cannot be written as a nontrivial direct sum of subobjects.  If $\kk$ is an algebraically closed field, then by Schur's lemma any simple object is absolutely irreducible and by Fitting's lemma any finite-length indecomposable object is absolutely indecomposable and end-nilpotent (see \cite[Lemma II.4.1]{Alperin}). 

  \subsection{Projective and injective objects}\label{SS:ProjInj}
Let $\cat$ be an abelian category.  Recall an object $P$ of $\cat$ is \emph{projective} if the functor $\Hom_\cat(P,-)\colon \cat \to \mathrm{Set}$ preserves epimorphisms, that is, if for any epimorphism $p\colon X \to Y$ and any morphism $f\colon P \to Y$ in $\cat$, there exists a morphism $g \colon P \to X$ in~$\cat$ such that $f=pg$. We denote by $\Proj(\cat)$ the class of projective objects of $\cat$.    An object of $\cat$ is \emph{injective} if it is projective in the opposite category $\cat^\mathrm{op}$. In other words, an object $Q$ of $\cat$ is injective if for any monomorphism $i\colon X \to Y$ and any morphism $f\colon X \to Q$ in $\cat$, there exists a morphism $g \colon Y \to Q$ in $\cat$ such that $f=gi$.

When $\cat$ is pivotal the dual of a projective object is again projective and, hence, projective and injective objects coincide (see \cite[Lemma~17]{GPV} or \cite[6.1.3]{EGNO}).  Thus in this case $\Proj(\cat)$ is also the class of injective objects of $\cat$. The projective cover of an object is unique up to non-unique isomorphism, if it exists.  We say $\cat$ has \emph{enough projectives} if every object in $\cat$ has a projective cover.

We call
a \kt category
$\cat$ \emph{locally-finite} if, for every pair of objects $X,Y$ in $\cat$, $\Hom_{\cat}(X,Y)$ has a finite length composition series as a $\kk$-module and if every object has finite length.  If $\cat$ is also pivotal, then the projective cover of a simple object, $P$, is indecomposable and has a unique simple subobject which we call the \emph{socle} of $P$ (see \cite[Remark 6.1.5]{EGNO}).
By definition, $\cat$ is \emph{unimodular} if the socle of the projective cover of $\unit$  is isomorphic to $\unit$.

\subsection{Invertible objects}\label{SS:invertibleobjects}
We call an object $X$ in $\cat$ \emph{invertible} if $\ev_{X}:X^{*}\otimes X \to \unit$ and $\coev_{X}:\unit \to X \otimes X^{*}$ are isomorphisms.  For example, $\unit$ is always an invertible object and in a finite tensor category the socle of the projective cover of $\unit$ is always an invertible object  (see \cite[Section 6.4]{EGNO}).

\section{Right \texorpdfstring{$(\lega,\legb)$}{(alpha, beta)}-Traces}

\subsection{Ideals}
By a \emph{right (resp.\ left) ideal} of $\cat$ we mean a full subcategory, $\ideal$, of $\cat$ such that:  
\begin{enumerate}
\item \textbf{Closed under tensor products:}  If $V$ is an object of $\ideal$ and $W$ is any object of $\cat$, then $V\otimes W$ (resp.\ $W \otimes V $) is an object of $\ideal$.
\item \textbf{Closed under retracts:} If $V$ is an object of $\ideal$, $W$ is any object of $\cat$, and there exists morphisms  $f:W\to V$,  $g:V\to W$ such that $g  f=\Id_W$, then $W$ is an object of $\ideal$.
\end{enumerate}

An \emph{ideal} of $\cat$ is a full subcategory of $\cat$ which is both a right and left ideal.  For example, the full subcategory whose objects are the class of projective objects, $\Proj(\cat)$, is an ideal by \cite[Lemma~17]{GPV}.

\subsection{Traces} \label{SS:traces}
 Let $\cat$ be a pivotal $\kk$-category.  
A \emph{right partial trace} (with respect to $W$) is the map
$\tr_r^W \colon \Hom_\cat(V \otimes W, X \otimes W) \to \Hom_\cat(V,X)$
defined, for $g \in  \Hom_\cat(V \otimes W, X \otimes W)$, by
$$
\tr_r^W(g)=(\Id_X \otimes \tev_W)(g \otimes \Id_{W^*})(\Id_V \otimes \coev_W)$$
Similarly, a \emph{left partial trace} (with respect to $W$) is the map
$\tr_l^W \colon \Hom_\cat(W \otimes V, W \otimes X) \to \Hom_\cat(V,X)$
defined by 
$$
\tr_l^W(h)=(\ev_W \otimes \Id_X)(  \Id_{W^*}\otimes h)(\tcoev_W \otimes \Id_V).$$

  Let $\lega$ and $\legb$ be objects of $\cat$ and $\ideal$ a right ideal in $\cat$. Given a right ideal $\ideal$,   
a \emph{right (\lega,\legb)-trace} 
%
is a family of $\kk$-linear functions,
$$\{\mt_V:\Hom_\cat(\lega\otimes V,\legb\otimes V)\rightarrow \FK \}_{V \in \ideal},$$
where $V$ runs over all objects of $\ideal$, and such that the following two conditions hold:
\begin{enumerate}
\item   \textbf{Partial trace property.} If $U\in \ideal$ and $W\in \cat$, then for any $f\in \Hom_\cat(\lega\otimes U\otimes W, \legb \otimes U\otimes W)$ we have
\begin{equation*}
\mt_{U\otimes W}\left(f \right)
=\mt_U \left( \tr_r^W(f)  \right)
\end{equation*}
\item   \textbf{(\lega,\legb)-Cyclicity.} If $U,V\in \ideal$, then for any morphisms $f:\lega \otimes V \rightarrow \legb\otimes U $ and $g:U\rightarrow V$  in $\cat$ we have 
\begin{equation*}
\mt_V((\Id_{\legb}\otimes g) f)=\mt_U(f  (\Id_\lega\otimes g)).
\end{equation*} 
\end{enumerate}
Similarly, if $\ideal$ is a left ideal, then a \emph{left (\lega,\legb)-trace} is a family of linear functions,
$$\{\mt_V:\Hom_\cat(V\otimes \lega,V\otimes \legb)\rightarrow \FK \}_{V\in \ideal},$$
 which satisfies the obvious left partial trace property and the left $(\lega,\legb)$-cyclicity property.   
 %

For brevity, we sometimes simply say m-trace when referring to a left/right $(\lega , \legb )$-trace when the objects $\lega$ and $\legb$ are implicit.  Also, while the m-traces in this paper do not necessarily have the morphism sets of $\ideal$ as their domains, it is a convenient abuse of English to still say the m-trace is defined \emph{on} $\ideal$.

 \begin{remark} 
   When $\lega=\legb=\unit$, a right (resp.\ left) (\lega,\legb)-trace
   is a right (resp.\ left) modified trace as defined in \cite{GPV}.
  \end{remark}

  \subsection{The dual trace}\label{SS:RelatedTrace} As we now
  explain, there is a natural notion of the dual of a right
  m-trace. If $I$ is a full subcategory of $\cat$, define its
  \emph{dual} $I^{*}$ to be the full subcategory with objects
  $I^*=\{V\in\Obj(\cat):V^*\in I\}$.  It is straightforward to check
  if $I$ is a right (resp.\ left) ideal then $I^{*}$ is a left (resp.\
  right) ideal.  If $\mt$ is a right $(\lega,\legb)$-trace $\mt$ on
  $\ideal$, then given $V$ in $\ideal^{*}$ define
  $\mt^{*}_{V}: \Hom_{\cat}(V \otimes \legb^{*} ,
  V \otimes \lega^{*}) \to \kk$ by
  %
  \begin{equation}\label{eq:dual trace}
    \mt^*_{V}(f)=
  \mt_{V^{*}}((\phi_\legb^{-1}\otimes\Id_{V^*})f^*(\phi_\lega\otimes\Id_{V^*}))
  \end{equation}
  where $\phi$ is the pivotal structure.  In light of the following
  result we call $\mt^{*}$ the \emph{dual} of $\mt$.
  \begin{lemma}\label{L:dualtrace} Let $\mt$ be a right
    $(\lega,\legb)$-trace $\mt$ on a right ideal, $\ideal$. Then
    $\mt^{*}$ is a left $(\legb^*,\lega^*)$-trace on the left ideal
    $\ideal^*$.
\end{lemma}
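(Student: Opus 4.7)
The plan is to check the two defining axioms of a left $(\legb^{*},\lega^{*})$-trace directly from the formula for $\mt^{*}$. The assertion that $\ideal^{*}$ is a left ideal whenever $\ideal$ is a right ideal was already recorded immediately prior to the lemma, so I would only need to verify well-definedness, the left partial trace property, and left $(\legb^{*},\lega^{*})$-cyclicity. For well-definedness, I would unfold $(\Id_{V^{*}}\otimes\phi_{\legb}^{-1})f^{*}(\Id_{V^{*}}\otimes\phi_{\lega})$ using the pivotal identifications $(V\otimes X)^{*}\cong X^{*}\otimes V^{*}$ and $\phi_{\lega}:\lega\to\lega^{**}$, $\phi_{\legb}:\legb\to\legb^{**}$, to confirm that it lies in $\Hom_{\cat}(\lega\otimes V^{*},\legb\otimes V^{*})$ so that $\mt_{V^{*}}$ can indeed be applied. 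In the two nontrivial verifications below the strategy is uniform: unfold $\mt^{*}$, transport the input along $(-)^{*}$ and the pivotal $\phi$'s, and then appeal to the corresponding right-handed property of $\mt$.

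For the left partial trace property, I would take $U\in\ideal^{*}$, $W\in\cat$, and $f\in\Hom_{\cat}(W\otimes U\otimes\legb^{*},W\otimes U\otimes\lega^{*})$. Unfolding $\mt^{*}_{W\otimes U}(f)$ gives $\mt_{(W\otimes U)^{*}}$ of the $\phi$-adjusted dual of $f$. Under $(W\otimes U)^{*}=U^{*}\otimes W^{*}$ this becomes $\mt_{U^{*}\otimes W^{*}}$ of a morphism in $\Hom_{\cat}(\lega\otimes U^{*}\otimes W^{*},\legb\otimes U^{*}\otimes W^{*})$. I would then apply the right partial trace property of $\mt$ with respect to $W^{*}$. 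The key identity to be checked at this step is that, under the pivotal identifications, $\tr_{r}^{W^{*}}$ of the dual of $f$ coincides with the dual of $\tr_{l}^{W}(f)$; this is standard and can be verified either diagrammatically from the duality axioms or by a short direct manipulation using $\ev_{W^{*}}$, $\coev_{W^{*}}$, $\tev_{W}$, $\tcoev_{W}$, and $\phi_{W}$. With that identity in hand the expression becomes exactly $\mt^{*}_{U}(\tr_{l}^{W}(f))$, as required.

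For left cyclicity, take $U,V\in\ideal^{*}$, a morphism $f\colon V\otimes\legb^{*}\to U\otimes\lega^{*}$, and $g\colon U\to V$. Unfolding the definition of $\mt^{*}$ and using $(f_{1}f_{2})^{*}=f_{2}^{*}f_{1}^{*}$ together with $(h_{1}\otimes h_{2})^{*}=h_{2}^{*}\otimes h_{1}^{*}$, both $\mt^{*}_{V}((g\otimes\Id_{\lega^{*}})f)$ and $\mt^{*}_{U}(f(g\otimes\Id_{\legb^{*}}))$ become values of $\mt$ at morphisms built from $f^{*}$ and appropriate $\phi$'s; the two differ only by whether $\Id\otimes g^{*}$ sits on one side of $f^{*}$ or the other, with $g^{*}\colon V^{*}\to U^{*}$. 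The right $(\lega,\legb)$-cyclicity of $\mt$ applied to $g^{*}$ then yields the required equality.

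The main obstacle is careful bookkeeping of where each $\phi_{\lega}$, $\phi_{\legb}$, and coherence isomorphism $\gamma_{V,W}$ sits, and of the order of tensor factors after dualization. In a strict pivotal category the $\gamma$'s become identities, so the computation really only concerns the correct placement of the $\phi$'s; the naturality of $\phi$ as a monoidal isomorphism is what makes everything cohere. Both verifications are considerably cleaner in the graphical calculus for pivotal categories, where the required equalities become topological moves on ribbon diagrams.
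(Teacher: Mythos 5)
Your proposal is correct and takes essentially the same route as the paper, which dispatches the lemma in one line by citing exactly the identity you isolate as the key step (that, after the pivotal identifications, the right partial trace of $f^{*}$ agrees with the dual of the left partial trace of $f$); the cyclicity half then falls out, as you note, because dualization reverses composition and lets you invoke right $(\lega,\legb)$-cyclicity applied to $g^{*}$. Your write-up is simply a fuller account of the same bookkeeping the paper leaves implicit.
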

\begin{proof}
  To see that the family of linear forms $\mt^*_V$ in \eqref{eq:dual
    trace} is a left m-trace, we need to show that it satifies the left
  partial trace property and the $(\legb^*,\lega^*)$-cyclicity.
  
  Let $U\in\ideal^*$ and $f\in \Hom_{\cat}(W\otimes U \otimes \legb^{*} ,
  W\otimes U \otimes \lega^{*})$ then
  \begin{align*}
    \mt^*_{W\otimes U}\left(f \right)
  &=\mt_{U^*\otimes W^*}\left((\phi_\legb^{-1}\otimes
    \Id_{U^*\otimes W^*})f^*(\phi_\lega\otimes\Id_{U^*\otimes W^*}) \right)\\
    &=\mt_{U^*}\left((\phi_\legb^{-1}\otimes
    \Id_{U^*})\tr_r^{W^*}(f^*)(\phi_\lega\otimes\Id_{U^*}) \right)\\
    &=\mt^*_U \left( \tr_l^W(f)  \right)
  \end{align*}
  where the last equality holds because the right partial trace
  of the dual morphism is the dual of the left partial trace.

  Given $U,V\in \ideal$,
  $f:V \otimes \legb^{*}\rightarrow U \otimes \lega^{*} $ and
  $g:U\rightarrow V$ then
  \begin{align*}
    \mt_V^*((g\otimes\Id_{\lega^{*}}) f)
    &=\mt_{V^*}((\phi_\legb^{-1}\otimes
    \Id_{V^*})f^*(\phi_\lega\otimes\Id_{U^*})(\Id_{\lega}\otimes g^*) )\\
    &=\mt_{U^*}((\Id_{\legb}\otimes g^*)(\phi_\legb^{-1}\otimes
    \Id_{V^*})f^*(\phi_\lega\otimes\Id_{U^*}) )\\
    &=\mt^*_{U}(f(\Id_{\legb}\otimes g) ).
  \end{align*}
\end{proof}
One can analogously define the dual of a left m-trace on a left ideal
$I$ and obtain a right m-trace on $I^{*}$.  
If $\mt$ is a right $(\lega, \legb)$-trace on $\ideal$, then the family of maps $\tilde{\mt}_{V}: \Hom_{\cat}\left(\lega^{**} \otimes V, \legb^{**}\otimes V \right) \to \kk$ given by 
\begin{equation}\label{E:doubledual}
\tilde{\mt}_{V}(f) = \mt_{V}\left( (\phi^{-1}_{\legb} \otimes \Id_{V})f(\phi_{\lega} \otimes \Id_{V}) \right)
\end{equation}
defines a right $(\lega^{**}, \legb^{**})$-trace on $\ideal= \ideal^{**}$.  Similarly for left m-traces.  Furthermore, a straightforward check verifies that the m-trace obtained by dualizing a right or left m-trace twice is related to the original right or left m-trace via \eqref{E:doubledual}.

\subsection{Related traces}
\newcommand{\stu}{{^\#}}
\newcommand{\std}{{_\#}}
We next explain how to construct new m-traces from old.  Assume $\lega_{1}$, $\legb_{1}$, $\lega_{2}$, and $\legb_{2}$ are a fixed list of objects in $\cat$ and we have a fixed morphism $h:\lega_2^*\otimes\legb_2\to \lega_1^*\otimes\legb_1$.  For any object $V$ in $\cat$, the morphism $h$ induces a $\kk$-linear map
\[
h\std: \Hom_\cat( \lega_2\otimes V,\legb_2\otimes V)\to
\Hom_\cat( \lega_1\otimes V, \legb_1\otimes V)
\]
given by 
\[
f \mapsto   (\tev_{\lega_{1}} \otimes \Id_{\legb_{1}} \otimes \Id_{V})     (\Id_{\lega_{1}} \otimes h \otimes \Id_{V})       (\Id_{\lega_{1}} \otimes \Id_{\lega_{2}^{*}} \otimes f)       (\Id_{\lega_{1}} \otimes \tcoev_{\lega_{2}} \otimes \Id_{V}).
\]

\begin{lemma}
  Let $\mt$ be a right $(\lega_1,\legb_1)$-trace on a right ideal $I$.
  Assume we have a fixed morphism $h \in\Hom_\cat(\lega_2^*\otimes\legb_2,\lega_1^*\otimes\legb_1)$.
  Then the family of $\kk$-linear maps $h\stu\mt$, 
\[
\left\{(h\stu\mt)_{V}: \Hom_{\cat}(\lega_{2} \otimes V, \legb_{2}\otimes V) \to \kk  \right\}_{V \in \ideal },
\] defined by 
\[
(h\stu\mt)_{V}(f)=\mt(h\std(f)),
\]  is a right $(\lega_2,\legb_2)$-trace on $I$.  

Furthermore, if
  $h'\in\Hom_\cat(\lega_3^*\otimes\legb_3,\lega_2^*\otimes\legb_2)$
  then ${h'}\stu(h\stu\mt)=(hh')\stu\mt$.
\end{lemma}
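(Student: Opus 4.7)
The plan is to verify the partial trace and cyclicity properties in turn, reducing each to the corresponding property of $\mt$ via two naturality lemmas for $h_*$. First I would observe the following two ``interchange'' identities, both of which are essentially diagrammatic consequences of bifunctoriality of $\otimes$ (the strands on which $h$, $\tev_{\lega_1}$, $\tcoev_{\lega_2}$ act are disjoint from the $V$-strand, $W$-strand, or $g$-strand under consideration):
\begin{itemize}
\item[(a)] If $f \in \Hom_\cat(\lega_2 \otimes U \otimes W, \legb_2 \otimes U \otimes W)$ with $U \in \ideal$ and $W \in \cat$, then $\tr_r^W(h_*(f)) = h_*(\tr_r^W(f))$, since the right partial trace contracts the $W$-strand while $h_*$ only modifies the leftmost strands.
\item[(b)] If $U,V \in \ideal$, $f\colon \lega_2 \otimes V \to \legb_2 \otimes U$, and $g\colon U \to V$ (extending $h_*$ to morphisms with possibly different $V$ and $U$ in source and target by the same formula), then
\[
h_*\bigl((\Id_{\legb_2}\otimes g)f\bigr)=(\Id_{\legb_1}\otimes g)\,h_*(f),\qquad h_*\bigl(f(\Id_{\lega_2}\otimes g)\bigr)=h_*(f)(\Id_{\lega_1}\otimes g).
\]
\end{itemize}

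Given (a), the partial trace property for $h_*\mt$ is immediate:
\[
(h_*\mt)_{U\otimes W}(f)=\mt_{U\otimes W}(h_*(f))=\mt_U\bigl(\tr_r^W(h_*(f))\bigr)=\mt_U\bigl(h_*(\tr_r^W(f))\bigr)=(h_*\mt)_U(\tr_r^W(f)),
\]
using the partial trace property of $\mt$ in the middle step. Given (b), the $(\lega_2,\legb_2)$-cyclicity follows analogously:
\[
(h_*\mt)_V\bigl((\Id_{\legb_2}\otimes g)f\bigr)=\mt_V\bigl((\Id_{\legb_1}\otimes g)h_*(f)\bigr)=\mt_U\bigl(h_*(f)(\Id_{\lega_1}\otimes g)\bigr)=(h_*\mt)_U\bigl(f(\Id_{\lega_2}\otimes g)\bigr),
\]
where the middle equality is the $(\lega_1,\legb_1)$-cyclicity of $\mt$. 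This handles the first assertion.

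For the functoriality statement $h'_*(h_*\mt)=(h'h)_*\mt$, I would reduce it to the pointwise identity $(h'h)_*(f) = h_*(h'_*(f))$ on morphisms $f\colon \lega_3 \otimes V \to \legb_3 \otimes V$. Expanding the right-hand side produces a composition in which a $\tev_{\lega_2}$ appearing in the outer $h_*$ meets a $\tcoev_{\lega_2}$ appearing in the inner $h'_*$ with only identity morphisms (and the morphisms $h$, $h'$ acting on disjoint strands) between them; applying the snake identity $(\tev_{\lega_2} \otimes \Id)(\Id \otimes \tcoev_{\lega_2}) = \Id_{\lega_2^*}$ collapses this segment and leaves precisely the defining expression of $(h'h)_*(f)$, with $h'h = h \circ h'$ occurring on the contracted strand. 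This is the only step with any content; the rest is a direct rearrangement best seen diagrammatically. Evaluating $\mt$ then yields $h'_*(h_*\mt)=(h'h)_*\mt$.

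The main potential obstacle is purely bookkeeping in (b) and in the snake-identity reduction — all the steps are routine graphical calculations, but one must keep track of which strand each duality morphism and each occurrence of $h$, $h'$ acts on in order to apply the snake identity cleanly. Writing the argument diagrammatically (as the authors suggest doing throughout) renders every identity above essentially transparent.
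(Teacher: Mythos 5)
Your plan is correct and is precisely the ``straightforward verification using the definition of $h_*$ and $h_*\mt$'' that the paper invokes without spelling out: the interchange identities (a) and (b) are exactly what reduce the two trace axioms for $h_*\mt$ to those for $\mt$, and the functoriality statement does reduce to showing $h_*(h'_*(f))=(hh')_*(f)$ by sliding $\tcoev_{\lega_2}$ past $f$ and $\tcoev_{\lega_3}$ and then collapsing a zig-zag. One small bookkeeping slip worth fixing before you commit this to paper: in your description of the last step you have the roles of the two loop morphisms reversed (the $\tcoev_{\lega_2}$ lives in the \emph{outer} $h_*$ and the $\tev_{\lega_2}$ in the \emph{inner} $h'_*$), and the snake identity you quote is not the one that applies --- the composite $(\tev_{\lega_2}\otimes\Id)(\Id\otimes\tcoev_{\lega_2})$ equals $\Id_{\lega_2}$, not $\Id_{\lega_2^*}$, and what is actually needed here is the other zig-zag, $(\Id_{\lega_2^*}\otimes\tev_{\lega_2})(\tcoev_{\lega_2}\otimes\Id_{\lega_2^*})=\Id_{\lega_2^*}$, which straightens the $\lega_2^*$-strand connecting $h'$ to $h$ and yields $h\circ h'$ on the remaining strand. (You also correctly read the paper's $h'h$ as $h\circ h'$; given the stated convention $fg=f\circ g$ that is a typo in the statement, and your reading is the one that type-checks.)
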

\begin{proof} This a straightforward verification using the definition of $h\std$ and $h\stu\mt$.
\end{proof}
\noindent Similarly, a morphism $h:\legb_2\otimes\lega_2^*\to \legb_1\otimes\lega_1^*$ induces a $\kk$-linear map
$$h\std: \Hom_\cat(V\otimes\lega_2,V\otimes \legb_2)\to
\Hom_\cat(V\otimes\lega_1,V\otimes \legb_1)$$
and if $\mt$ is a left $(\lega_1,\legb_1)$-trace on an ideal $I$, then we can analogously define a left $(\lega_2,\legb_2)$-trace $h\stu\mt$.


\begin{prop}\label{P:}  Let $I$ be a right ideal of $\cat$.  Then there are canonical bijections between the following families:
\begin{enumerate}
\item the right $(\lega,\legb)$-traces on $I$,
\item the right $(\legb^*\otimes\lega,\unit)$-traces on $I$,
\item the right $(\unit,\lega^*\otimes\legb)$-traces on $I$,
\item the left $(\legb^*,\lega^*)$-traces on $I^*$,
\item the left $(\lega\otimes\legb^*,\unit)$-traces on $I^*$,
\item the left $(\unit,\legb\otimes\lega^*)$-traces on $I^*$.
\end{enumerate}
\end{prop}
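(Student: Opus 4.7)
The plan is to obtain all six bijections by repeatedly applying two tools already assembled above: the Related Traces Lemma and the dualization Lemma~\ref{L:dualtrace}. The key observation is that when $h$ is an isomorphism, the composition identity ${h'}_{*}(h_{*}\mt)=(h'h)_{*}\mt$ forces $h_{*}$ to be a bijection with inverse $(h^{-1})_{*}$; and the text already notes that dualizing an m-trace twice returns the original. Thus each arrow in the chain (1)$\leftrightarrow$(2)$\leftrightarrow$(3)$\leftrightarrow$(4)$\leftrightarrow$(5)$\leftrightarrow$(6) can be produced by an invertible choice of $h$, possibly composed with one application of duality.

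First I would establish (1)$\leftrightarrow$(2) and (1)$\leftrightarrow$(3) using the right Related Traces Lemma with isomorphisms built from the pivotal structure, the associator, and the unit constraints. For (1)$\to$(2) take $h$ to be the composition $(\legb^{*}\otimes\lega)^{*}\otimes\unit \xrightarrow{\sim} \lega^{*}\otimes\legb^{**} \xrightarrow{\Id_{\lega^{*}}\otimes\phi_{\legb}^{-1}} \lega^{*}\otimes\legb$. For (1)$\to$(3) take $h$ to be the canonical unit iso $\unit^{*}\otimes(\lega^{*}\otimes\legb) \xrightarrow{\sim} \lega^{*}\otimes\legb$. In both cases $h$ is invertible, so $h_{*}$ is a bijection by the functoriality of the construction, with inverse $(h^{-1})_{*}$.

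Next, Lemma~\ref{L:dualtrace} together with its left-to-right analog produces the bridge (1)$\leftrightarrow$(4): dualization sends a right $(\lega,\legb)$-trace on $\ideal$ to a left $(\legb^{*},\lega^{*})$-trace on $\ideal^{*}$, and dualizing twice returns the original trace, so it is a bijection. An analogous application of the left Related Traces construction then yields (4)$\leftrightarrow$(5) and (4)$\leftrightarrow$(6); the required $h$'s are again assembled from canonical unit/associator isomorphisms combined with the pivotal iso $\phi$ at $\lega$ and $\legb$, each of which is an isomorphism. Composing all of these gives the full system of six canonical bijections; equivalently, one may route everything through (1), using duality as the sole bridge between right and left.

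The main obstacle will be careful bookkeeping. For each of the six pairs one must identify the precise canonical isomorphism $h$ so that the resulting m-trace lands in exactly the slot indicated in the statement; in particular, one must track the identifications $V\cong V^{**}$ and $(X\otimes Y)^{*}\cong Y^{*}\otimes X^{*}$ so that the tensor orders agree with those produced by $h_{*}$. Once the canonical $h$'s are written down explicitly, checking that opposite arrows compose to the identity is a mechanical verification using ${h'}_{*}(h_{*}\mt)=(h'h)_{*}\mt$ and the double-dualization property.
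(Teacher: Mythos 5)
Your strategy—routing everything through the Related Traces Lemma with canonical $h$'s built from the unit, associativity, and pivotal data, together with Lemma~\ref{L:dualtrace}—is exactly what the paper intends; the paper's entire justification is the sentence preceding the proposition (``Using the obvious morphisms as $h$ in the previous lemma along with Lemma~\ref{L:dualtrace} yields the following result''). Your explicit $h$'s for (1)$\leftrightarrow$(2) and (1)$\leftrightarrow$(3) check out, and duality gives (1)$\leftrightarrow$(4).

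Be careful, though, with (4)$\leftrightarrow$(5) and (4)$\leftrightarrow$(6), where you assert the needed $h$'s are ``again assembled from canonical unit/associator isomorphisms combined with the pivotal iso.'' Under the paper's stated convention for the left version of the Related Traces construction, $h$ must be a morphism $\legb_2\otimes\lega_2^*\to\legb_1\otimes\lega_1^*$. Taking $(\lega_1,\legb_1)=(\legb^*,\lega^*)$ from (4) and $(\lega_2,\legb_2)=(\lega\otimes\legb^*,\unit)$ from (5), the required $h$ is a map $(\lega\otimes\legb^*)^*\cong\legb^{**}\otimes\lega^*\longrightarrow\lega^*\otimes\legb^{**}$, which \emph{swaps} the two tensor factors and is therefore not canonical in a non-braided pivotal category; item (6) presents the same obstruction. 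Compare: dualizing (3) yields a left $(\legb^*\otimes\lega^{**},\unit)$-trace on $I^*$, canonically related (via $\phi_\lega$) to a left $(\legb^*\otimes\lega,\unit)$-trace, and dualizing (2) yields a left $(\unit,\lega^*\otimes\legb)$-trace on $I^*$. So either items (5) and (6) should read $(\legb^*\otimes\lega,\unit)$ and $(\unit,\lega^*\otimes\legb)$, or the left Related Traces morphism should be taken as $h\colon\lega_2^*\otimes\legb_2\to\lega_1^*\otimes\legb_1$; you should resolve this ordering issue explicitly before the ``mechanical verification'' step, since as currently written the claim that each $h$ is a canonical isomorphism fails for (5) and (6).
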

\begin{proof}
  To see the first bijection consider the isomorphism
  $h=\Id_{\alpha^*}\otimes
  \phi_{\beta}^{-1}$.
  %
  Applying this morphism to the previous lemma yields the 
  result.  Using Lemma~\ref{L:dualtrace}, the other bijections
  are similar.
\end{proof}
 
 \section{Existence of right and left \texorpdfstring{$(\lega,\legb)$}{(alpha,beta)}-traces}
 
\subsection{Trace tuples}
Let $\cat$ be a pivotal \kt category. In this section we require $\kk$ to be an integral domain.   To simplify exposition we only work with right m-traces and right ideals.  The interested reader can easily formulate the analogous left versions of the definitions and statements.  
  \begin{Df}
Let $\PP$, $\lega$ and $\legb$ be objects of $\cat$.   Let $\eta: \lega\to \PP $ and $\epsilon: \PP \to \legb$ be nonzero morphisms.  
We say $(\PP,\lega, \legb,\eta, \epsilon)$ is a \emph{trace tuple} if the following conditions hold.
\begin{enumerate}
\item The object $\PP$ is absolutely indecomposable and end-nilpotent.  
\item The left $\kk$-modules $\Hom_\cat(\lega,\PP)$ and $\Hom_\cat(\PP,\legb)$ are free and generated by $\eta$ and $\epsilon$, respectively.  
\end{enumerate}
 \end{Df}
 
 Let $(\PP,\lega, \legb,\eta, \epsilon)$ be a trace tuple. 
Consider the following classes of objects:
\begin{align*} 
\ideal_{\lega}&=\{V \in \cat : \text{ there exists } \ss_{V}:  \PP\otimes V \to \lega\otimes V\text{ such that } \ss_{V}(\eta\otimes \Id_{V})=\Id_{\lega\otimes V}\}, \\
\ideal^{\legb}&=\{V \in \cat : \text{ there exists } \tt_{V}:  \legb\otimes V \to \PP\otimes V  \text{ such that } (\epsilon \otimes \Id_{V})\tt_{V}=\Id_{\legb\otimes V}\},\\
\ideal_{\lega}^{\legb}&=\ideal_{\lega}\cap \ideal^{\legb}.
\end{align*}
For each of these, we abuse notation by using the same name for the full subcategory of $\cat$ consisting of objects isomorphic to an object in the given class. 

 \begin{lemma}\label{L:tracetupleideals} If $(\PP,\lega, \legb,\eta, \epsilon)$ is a trace tuple, then $\ideal_{\lega}$, $\ideal^{\legb}$ and $\ideal_{\lega}^{\legb}$ are   right ideals.  
 \end{lemma}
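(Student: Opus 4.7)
The proof is a direct verification that the natural candidate sections behave well under tensoring on the right and passing to retracts; no deep input is required beyond the definitions.

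The plan is to check the two axioms of a right ideal for $\ideal_{\lega}$ and $\ideal^{\legb}$ separately, and then deduce the statement for $\ideal_{\lega}^{\legb}$ from the elementary fact that the intersection of two right ideals is a right ideal. Throughout, I work with the strict pivotal structure as in the preliminaries, so that I may freely insert and delete associators and unit constraints.

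For $\ideal_{\lega}$, fix $V \in \ideal_{\lega}$ with section $\ss_{V} : \PP \otimes V \to \lega \otimes V$ satisfying $\ss_{V}(\eta \otimes \Id_{V}) = \Id_{\lega \otimes V}$. For closure under tensor products on the right, given any $W \in \cat$, the morphism $\ss_{V \otimes W} := \ss_{V} \otimes \Id_{W} : \PP \otimes V \otimes W \to \lega \otimes V \otimes W$ satisfies
\[
\ss_{V \otimes W}(\eta \otimes \Id_{V \otimes W}) = (\ss_{V}(\eta \otimes \Id_{V})) \otimes \Id_{W} = \Id_{\lega \otimes V \otimes W},
\]
so $V \otimes W \in \ideal_{\lega}$. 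For closure under retracts, suppose $f : W \to V$ and $g : V \to W$ satisfy $gf = \Id_{W}$. Define
\[
\ss_{W} := (\Id_{\lega} \otimes g)\, \ss_{V}\, (\Id_{\PP} \otimes f) : \PP \otimes W \to \lega \otimes W.
\]
Using naturality (i.e.\ $(\Id_{\PP} \otimes f)(\eta \otimes \Id_{W}) = (\eta \otimes \Id_{V})(\Id_{\lega} \otimes f)$) and the defining property of $\ss_{V}$, one computes
\[
\ss_{W}(\eta \otimes \Id_{W}) = (\Id_{\lega} \otimes g)(\Id_{\lega} \otimes f) = \Id_{\lega} \otimes (gf) = \Id_{\lega \otimes W},
\]
so $W \in \ideal_{\lega}$.

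The argument for $\ideal^{\legb}$ is entirely parallel: for tensor closure take $\tt_{V \otimes W} := \tt_{V} \otimes \Id_{W}$; for retract closure, given $f : W \to V$ and $g : V \to W$ with $gf = \Id_{W}$, take $\tt_{W} := (\Id_{\PP} \otimes g)\, \tt_{V}\, (\Id_{\legb} \otimes f)$, and verify $(\epsilon \otimes \Id_{W})\tt_{W} = \Id_{\legb \otimes W}$ by the same kind of naturality manipulation. Finally, $\ideal_{\lega}^{\legb} = \ideal_{\lega} \cap \ideal^{\legb}$ inherits both axioms, since any candidate tensor factor or retract belongs to each piece separately. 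There is no real obstacle here; the only thing to watch is to write the candidates $\ss_{V \otimes W}$, $\tt_{V \otimes W}$, $\ss_{W}$, $\tt_{W}$ as compositions that clearly start and end in the correct object, after which the verification is a one-line computation.
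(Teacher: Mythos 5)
Your proof is correct and is exactly the ``straightforward check using the definitions'' that the paper asserts without writing out: tensoring the given section or retraction on the right, conjugating by the retract pair, and using the interchange law to verify the defining identity. Nothing further is needed.
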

\begin{proof}
To see $\ideal_{\lega}$ is a right idea we need to show it is closed under tensor products and retracts:  by definition if $V\in \ideal_{\lega}$ then there exists $ \ss_{V}:  \PP\otimes V \to \lega\otimes V$ such that $ \ss_{V}(\eta\otimes \Id_{V})=\Id_{\lega\otimes V}$.  Given $W\in \cat$ let $ \ss_{V\otimes W}= \ss_{V}\otimes \Id_W$.  Then  $\ss_{V\otimes W}(\eta\otimes \Id_{V\otimes W})=\Id_{\lega\otimes V\otimes W} $ and this shows  $\ideal_{\lega}$ is closed under tensor products.  To see $\ideal_{\lega}$ is closed under retracts suppose there exists morphisms $f:W\to V$,  $g:V\to W$ such that $g  f=\Id_W$, let $\ss_{W}=(\Id_\alpha\otimes g)\ss_{V}(\Id_\PP\otimes f)$ then $ \ss_{W}(\eta\otimes \Id_{W})=\Id_{\lega\otimes W}$.  
The proof that $\ideal^{\legb}$ and $\ideal_{\lega}^{\legb}$ are   right ideals is similar.  
\end{proof}

We set the following notation.
 When $\PP$ is absolutely indecomposable write $f \mapsto \langle f \rangle$ for the canonical quotient map $\End_{\cat}(\PP) \to \kk$.
 For a trace tuple $(\PP,\lega, \legb,\eta, \epsilon)$ and morphisms $g\in \Hom_\cat(\lega,\PP)$ and $h\in \Hom_\cat(\PP,\legb)$, let $\brk{g}_{\eta},\brk{h}_{\epsilon}\in \kk$ be defined by 
$$g=\brk{g}_{\eta}\eta \text{ and } h=\brk{h}_{\epsilon}\epsilon.$$
 \begin{lemma}\label{L:brk_epsbrk_eta} Let $(\PP,\lega, \legb,\eta, \epsilon)$ be a trace tuple.
  For any  $f\in \End_\cat(\PP) $ the following statements hold: 
 \begin{enumerate}
\item  $\epsilon f = \brk{f} \epsilon$,
\item $f\eta = \brk{f}\eta$,
\item \label{LI3:brk_epsbrk_eta} $\brk{f}=\brk{\epsilon f}_{\epsilon}=\brk{f\eta}_{\eta}$. 
\end{enumerate}
  \end{lemma}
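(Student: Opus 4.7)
The plan is to reduce parts (1) and (2) to the statement that two natural scalars associated to each $f \in \End_\cat(\PP)$ recover the canonical quotient $\brk{f}$; part (3) will then be immediate from the definitions.

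First I would define, for each $f \in \End_\cat(\PP)$, scalars $\psi(f), \psi'(f) \in \kk$ by $\epsilon f = \psi(f)\, \epsilon$ and $f \eta = \psi'(f)\, \eta$. These exist and are unique because $\Hom_\cat(\PP,\legb) = \kk \epsilon$ and $\Hom_\cat(\lega,\PP) = \kk \eta$ are free of rank one. A direct associativity check shows $\psi$ is a $\kk$-algebra homomorphism and $\psi'$ is a $\kk$-algebra anti-homomorphism, both sending $\Id_\PP$ to $1$. So proving (1) and (2) reduces to the assertion $\psi = \psi' = \brk{\cdot}$.

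The key (and essentially only nontrivial) step is to verify that both $\psi$ and $\psi'$ kill the Jacobson radical $\operatorname{J}(\End_\cat(\PP))$. Here I would use the end-nilpotence of $\PP$ --- which gives $\operatorname{J}(\End_\cat(\PP))^{n} = 0$ for some $n$, hence $j^{n}=0$ for every $j$ in the radical --- together with the hypothesis that $\kk$ is an integral domain: applying $\psi$ to $j^{n}=0$ yields $\psi(j)^{n}=0$ in $\kk$, forcing $\psi(j)=0$, and identically for $\psi'$. Thus both $\psi$ and $\psi'$ factor through $\End_\cat(\PP)/\operatorname{J}(\End_\cat(\PP)) \cong \kk$, and since both send $\Id_\PP$ to $1$, they must agree with the canonical projection $\brk{\cdot}$. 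This proves (1) and (2).

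For (3), the identity $\brk{\epsilon f}_{\epsilon}\, \epsilon = \epsilon f = \brk{f}\, \epsilon$ obtained from (1), combined with the freeness of $\Hom_\cat(\PP,\legb)$ on $\epsilon$, immediately gives $\brk{\epsilon f}_{\epsilon} = \brk{f}$; an analogous argument from (2) produces $\brk{f \eta}_{\eta} = \brk{f}$. The main obstacle, modest as it is, is the nilpotence/integral-domain step; everything else is bookkeeping forced by the rank-one freeness of the relevant hom-modules.
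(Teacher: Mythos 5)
Your proof is correct and follows essentially the same approach as the paper's. The paper writes $f = \brk{f}\Id_{\PP} + n$ with $n$ in the radical, shows $\epsilon n = \lambda\epsilon$, iterates to $\epsilon n^{k} = \lambda^{k}\epsilon = 0$, and invokes the integral-domain hypothesis to conclude $\lambda = 0$; this is precisely your argument that the multiplicative map $\psi$ sends nilpotents to zero and hence factors through $\End_{\cat}(\PP)/\operatorname{J}$, just expressed without explicitly naming $\psi$ or $\psi'$ as (anti-)homomorphisms.
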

  
 \begin{proof}
Since $\PP $ is absolutely indecomposable, we have $f=\brk{f} \Id_{\PP}+ n$  for $\brk{f} \in \kk$ and $n \in J(\End_{\cat}(P))$.  The first statement then follows once we prove $\epsilon n=0$.  Since $\Hom_\cat(\PP,\legb)$ is a free left $\kk$-module generated by $\epsilon$ we have $\epsilon n=\lambda \epsilon$ for some $\lambda\in \kk$.  But since $P$ is end-nilpotent, $n$ is nilpotent and $n^k =0$ for some $k>0$.  But then $0=\epsilon n^k=\lambda^k \epsilon$ and, hence, $\lambda^k=0$.  Since we are assuming $\kk$ is an integral domain, it follows $\lambda=0$.  This proves the first statement, the second follows analogously. The first two parts of the lemma immediately imply the third statement.
 \end{proof}

\subsection{Existence of m-traces}
\begin{theorem}\label{T:ExistOfabtrace}
 Let   $(\PP,\lega, \legb,\eta, \epsilon)$ be a trace tuple.  Then there exists a right  $(\lega,\legb)$-trace on  $\ideal_{\lega}^{\legb}$ defined  for $V\in \ideal_{\lega}^{\legb}$ and  $f\in \Hom_\cat(\lega\otimes V,\legb\otimes V)$ by
 $$
 \mt_V(f)=\brk{\tr_r^V(\tt_{V}f)}_{\eta}=\brk{\tr_r^V(f \ss_{ V})}_{\epsilon}
 $$
where $\ss_{ V}: \PP\otimes V \to \lega\otimes V$ and $\tt_{V}:\legb\otimes V \to \PP\otimes V$ are any morphisms satisfying $\ss_{V}(\eta\otimes \Id_{V})=\Id_{\lega\otimes V}$ and $(\epsilon \otimes \Id_{V})\tt_{V}=\Id_{\legb\otimes V} $.
\end{theorem}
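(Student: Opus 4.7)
My plan is to verify the three required conditions in sequence: well-definedness of the formula (in particular, equality of the two displayed expressions and independence from the choice of $\ss_V$, $\tt_V$), the partial trace property, and the $(\lega,\legb)$-cyclicity. The unifying technical device is the ``sandwich'' scalar $\brk{\tr_r^V(\tt_V f \ss_V)} \in \kk$ attached to the endomorphism $\tt_V f \ss_V \in \End_\cat(\PP \otimes V)$.

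First I will establish the chain of equalities
\[
\brk{\tr_r^V(\tt_V f)}_\eta \;=\; \brk{\tr_r^V(\tt_V f \ss_V)} \;=\; \brk{\tr_r^V(f \ss_V)}_\epsilon.
\]
For the left-hand equality, precomposing $\tt_V f \ss_V$ with $\eta \otimes \Id_V$ collapses the $\ss_V$ factor via its defining relation, leaving $\tt_V f$. Applying $\tr_r^V$, using the naturality identity $\tr_r^V(F(h \otimes \Id_V)) = \tr_r^V(F) h$, and invoking Lemma~\ref{L:brk_epsbrk_eta}(2) gives the claim once $\eta$ is canceled using the freeness $\Hom_\cat(\lega,\PP) = \kk \eta$. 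The right-hand equality follows by the mirror argument with $(\epsilon \otimes \Id_V) \tt_V = \Id_{\legb \otimes V}$ and Lemma~\ref{L:brk_epsbrk_eta}(1). Since the outer two expressions each involve only one of the auxiliary morphisms, the common value is independent of both choices.

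Next, the partial trace property. Taking the natural choices $\ss_{U \otimes W} := \ss_U \otimes \Id_W$ and $\tt_{U \otimes W} := \tt_U \otimes \Id_W$, which visibly satisfy the splitting equations, the claim reduces to the standard identities $\tr_r^{U \otimes W} = \tr_r^U \circ \tr_r^W$ and $\tr_r^W((\tt_U \otimes \Id_W) f) = \tt_U \, \tr_r^W(f)$.

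I expect cyclicity to be the only real difficulty. The key tool is the partial-trace identity
\[
\tr_r^V((\Id_Y \otimes g) h) = \tr_r^U(h(\Id_X \otimes g)), \qquad h \in \Hom_\cat(X \otimes V, Y \otimes U),\ g \colon U \to V,
\]
which is a routine consequence of the naturality of $\coev$ and $\tev$ under the dual $g^*$. Applied with $h = f \ss_V$, it rewrites $\mt_V((\Id_\legb \otimes g) f)$ as $\brk{\tr_r^U(f \ss_V (\Id_\PP \otimes g))}_\epsilon$, to be compared with $\mt_U(f(\Id_\lega \otimes g)) = \brk{\tr_r^U(f(\Id_\lega \otimes g) \ss_U)}_\epsilon$. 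The delicate point is that $\ss_V$ and $\ss_U$ are unrelated choices, so the difference $\Delta := \ss_V(\Id_\PP \otimes g) - (\Id_\lega \otimes g) \ss_U$ need not vanish as a morphism $\PP \otimes U \to \lega \otimes V$; however, the splitting equations give $\Delta (\eta \otimes \Id_U) = 0$. Using the $\tt_U$-reformulation from the first step, I reduce to showing $\brk{\tr_r^U(\tt_U f \Delta)} = 0$: composing $\tt_U f \Delta \in \End_\cat(\PP \otimes U)$ with $\eta \otimes \Id_U$ yields $0$, and passing through $\tr_r^U$ and Lemma~\ref{L:brk_epsbrk_eta}(2) produces $\brk{\tr_r^U(\tt_U f \Delta)} \eta = 0$ in $\Hom_\cat(\lega,\PP) = \kk \eta$, forcing the coefficient to vanish. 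The subtlety is thus genuine: $\Delta$ is only zero ``modulo $\eta$'', and the argument converts this into the full trace-level identity by passing through the $\brk{-}$-coefficient.
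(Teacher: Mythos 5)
Your proof is correct and takes essentially the same route as the paper's: the well-definedness chain and the partial-trace argument coincide, and for cyclicity you and the paper both apply the cyclic naturality of $\tr_r$ together with Lemma~\ref{L:brk_epsbrk_eta} to pass through the coefficient $\brk{\,\cdot\,}$ on $\End_\cat(\PP)$. Your explicit $\Delta$-device is merely a reorganization of the paper's direct observation that $\ss_V(\Id_\PP\otimes g)(\eta\otimes\Id_U)=\Id_\lega\otimes g$ lets one replace $\ss_V(\Id_\PP\otimes g)$ by a genuine section of $\eta\otimes\Id_U$ after passing to $\brk{\,\cdot\,}$, so the two proofs are the same argument with slightly different bookkeeping.
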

\begin{proof}
First, we note $\mt_{V}$ is $\kk$-linear and the morphisms $\ss_{V}$ and $\tt_{V}$ exist because $V$ is assumed to lie in $\ideal_{\lega}^{\legb}$.  Next, let $\ss_{ V}$ and $\tt_{V}$ be any such morphisms.  Then,
\begin{align*}
\brk{\tr_r^V\big(\tt_{V}f\big)}_{\eta}=\brk{\tr_r^V\big(\tt_{V}f[\Id_{\lega\otimes V}]\big)}_{\eta}&=\brk{\tr_r^V\big(\tt_{V}f [\ss_{V}(\eta\otimes \Id_{V})]\big)}_{\eta}\\
&=\brk{\tr_r^V\big(\tt_{V}f \ss_{V}\big)\eta}_{\eta}\\
&=\brk{\epsilon\tr_r^V\big(\tt_{V}f \ss_{V}\big)}_{\epsilon}\\
&=\brk{\tr_r^V\big([\epsilon\otimes \Id_{V}] \tt_{V}f \ss_{V}\big)}_{\epsilon}\\
&=\brk{\tr_r^V\big(f \ss_{V}\big)}_{\epsilon},
\end{align*}
where  the fourth equality comes from Lemma \ref{L:brk_epsbrk_eta} part \eqref{LI3:brk_epsbrk_eta}.  Thus, $\mt_V(f)$ is independent of the choice of $\ss_{V}$ or $\tt_{V}$. 

Next we show this family of functions satisfies the partial trace property.  Let $U\in \ideal_{\lega}^{\legb}$, $W\in \ob$ and $f\in \Hom_\cat(\lega\otimes U\otimes W, \legb \otimes U\otimes W)$.  Since $U\in \ideal^{\legb}$ there exists $\tt_{U}:\legb\otimes U \to \PP\otimes U $ such that $ (\epsilon \otimes \Id_{U})\tt_{U}=\Id_{\legb\otimes U}$.  Choose $\tt_{U\otimes W}$ to be equal to $\tt_{U}\otimes \Id_W$ then $ (\epsilon \otimes \Id_{U\otimes W})\tt_{U\otimes W}=\Id_{\legb\otimes U\otimes W}$.  Therefore, we can use $\tt_{U\otimes W}$ to define $\mt_{U\otimes W}$ and we see
\begin{align*}
\mt_{U\otimes W}(f)&=\brk{\tr_r^{U\otimes W}\big(\tt_{U\otimes W}f\big)}_{\eta}\\
&=\brk{\tr_r^{U\otimes W}\big((\tt_{U}\otimes \Id_W) f\big)}_{\eta}\\
&=\brk{\tr_r^{U}\big(\tt_{U}(\tr_r^{W} f)\big)}_{\eta}\\
&=\mt_{U}\big(\tr_r^{W} (f)\big).
\end{align*}

To prove the $(\lega,\legb)$-cyclicity property, let $f:\lega \otimes V \rightarrow \legb\otimes U $ and $g:U\rightarrow V$.  Then, 
\begin{align*}
\mt_V((\Id_{\legb}\otimes g) f)
=\brk{\tr_r^V\big((\Id_{\legb}\otimes g) f\ss_{V}\big)}_{\epsilon}
&=\brk{\tr_r^U\big( f\ss_{V}(\Id_{\PP}\otimes g)\big)}_{\epsilon}\\
&=\brk{\tr_r^U\big((\epsilon \otimes \Id_{U}) \tt_{U} f\ss_{V}(\Id_{\PP}\otimes g)\big)}_{\epsilon}\\
&=\brk{\tr_r^U\big( \tt_{U} f(\Id_{\lega}\otimes g)\big)}_{\eta},
\end{align*}
where the first equality comes from the definition of the trace, the
second from the properties of the pivotal structure, the third from
the definition of $\ideal^{\legb}$, and the last from
Lemma \ref{L:brk_epsbrk_eta} part \eqref{LI3:brk_epsbrk_eta}.
\end{proof}

\subsection{Uniqueness}  It is of particular interest when one or both of $\lega$ and $\legb$ are the unit object.  In this case we have the following uniqueness result.
\begin{theorem}\label{T:UniqunessofTrace}
Suppose $(\PP,\lega, \legb,\eta, \epsilon)$ is a trace tuple with $\PP\in \ideal_{\lega}^{\legb}$.  

\begin{enumerate}
\item If $\legb =\unit$, then the right $(\lega , \unit )$-trace on $\ideal_{\lega}^{\unit}$ is unique up to a scalar.  Specifically, if $ \mt'$ is a right $(\lega , \unit )$-trace  on  $\ideal_{\lega}^{\unit}$ and $\mt$ is the right  $(\lega , \unit )$-trace defined by Theorem~\ref{T:ExistOfabtrace}, then 
$$
 \mt'= \mt'_\PP(\eta\otimes \epsilon)\mt.
$$
 Moreover, $\mt_\PP(\eta\otimes \epsilon)=1$.  
\item If $\lega =\unit$, then the right $(\unit , \legb )$-trace on $\ideal_{\unit}^{\legb}$ is unique up to a scalar.  Specifically, if $ \mt'$ is a right $(\unit , \legb )$-trace  on  $\ideal_{\unit}^{\legb}$ and $\mt$ is the right  $(\unit , \legb )$-trace defined by Theorem~\ref{T:ExistOfabtrace}, then 
$$
 \mt'= \mt'_\PP(\epsilon\otimes \eta)\mt.
$$
 Moreover, $\mt_\PP(\epsilon\otimes \eta)=1$.  
\end{enumerate}

\end{theorem}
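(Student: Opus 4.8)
The plan is to prove part~(1) by a two‑step reduction, and then to obtain part~(2) by the mirror‑image argument (interchanging the roles of $\ss$ and $\tt$, i.e.\ of $\eta$ and $\epsilon$; one may also dualize via Lemma~\ref{L:dualtrace}). Throughout, let $\mt'$ be an arbitrary right $(\lega,\unit)$-trace on $\ideal_{\lega}^{\unit}$ and let $\mt$ be the right $(\lega,\unit)$-trace produced by Theorem~\ref{T:ExistOfabtrace}. The hypothesis $\PP\in\ideal_{\lega}^{\legb}=\ideal_{\lega}^{\unit}$ enters only to guarantee that, for every $V\in\ideal_{\lega}^{\unit}$, the objects $\PP\otimes V$ and $\PP\otimes\PP$ again lie in $\ideal_{\lega}^{\unit}$, so that every m-trace appearing below is defined on them.

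First I would reduce $\mt'_{V}$ to $\mt'_{\PP}$. Fix $V\in\ideal_{\lega}^{\unit}$ and $f\in\Hom_{\cat}(\lega\otimes V,\unit\otimes V)$, and choose $\tt_{V}\colon\unit\otimes V\to\PP\otimes V$ with $(\epsilon\otimes\Id_{V})\tt_{V}=\Id$. Since $f=(\epsilon\otimes\Id_{V})(\tt_{V}f)$, applying $(\lega,\unit)$-cyclicity with $U=\PP\otimes V$, to the morphisms $\tt_{V}f\colon\lega\otimes V\to\PP\otimes V$ and $\epsilon\otimes\Id_{V}\colon\PP\otimes V\to V$, gives
\[
\mt'_{V}(f)=\mt'_{\PP\otimes V}\bigl((\tt_{V}f)(\Id_{\lega}\otimes\epsilon\otimes\Id_{V})\bigr).
\]
The partial trace property (applied to $\PP\otimes V$, tracing out the $V$-factor) then collapses the right‑hand side to $\mt'_{\PP}(G_{f})$, where
\[
G_{f}:=\tr_{r}^{V}\bigl((\tt_{V}f)(\Id_{\lega}\otimes\epsilon\otimes\Id_{V})\bigr)\in\Hom_{\cat}(\lega\otimes\PP,\PP)
\]
depends only on $f$ and the chosen $\tt_{V}$, not on $\mt'$. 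Running the same computation for $\mt$ yields $\mt_{V}(f)=\mt_{\PP}(G_{f})$ with the same $G_{f}$.

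Second I would pin down $\mt'_{\PP}$. Given $h\in\Hom_{\cat}(\lega\otimes\PP,\PP)$, choose $\ss_{\PP}\colon\PP\otimes\PP\to\lega\otimes\PP$ with $\ss_{\PP}(\eta\otimes\Id_{\PP})=\Id$; then $h=(h\ss_{\PP})(\eta\otimes\Id_{\PP})$, and $(\lega,\unit)$-cyclicity with $U=\PP\otimes\PP$ followed by the partial trace property yields
\[
\mt'_{\PP}(h)=\mt'_{\PP}\bigl(\tr_{r}^{\PP}((\eta\otimes\Id_{\PP})(\Id_{\lega}\otimes h\ss_{\PP}))\bigr).
\]
The crux is then that the $\eta$-strand pulls out of the partial trace, leaving
\[
\tr_{r}^{\PP}\bigl((\eta\otimes\Id_{\PP})(\Id_{\lega}\otimes h\ss_{\PP})\bigr)=\bigl\langle\tr_{r}^{\PP}(h\ss_{\PP})\bigr\rangle_{\epsilon}\,(\eta\otimes\epsilon)=\mt_{\PP}(h)\,(\eta\otimes\epsilon),
\]
where $\tr_{r}^{\PP}(h\ss_{\PP})\in\Hom_{\cat}(\PP,\unit)=\FK\epsilon$ and the last equality is the defining formula of $\mt$ at $V=\PP$. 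Hence $\mt'_{\PP}(h)=\mt_{\PP}(h)\,\mt'_{\PP}(\eta\otimes\epsilon)$ for all $h$. Combining the two steps, $\mt'_{V}(f)=\mt'_{\PP}(G_{f})=\mt'_{\PP}(\eta\otimes\epsilon)\,\mt_{\PP}(G_{f})=\mt'_{\PP}(\eta\otimes\epsilon)\,\mt_{V}(f)$, that is, $\mt'=\mt'_{\PP}(\eta\otimes\epsilon)\,\mt$. Finally, taking $\mt'=\mt$ and $h=\eta\otimes\epsilon$ in the last displayed identity gives $\mt_{\PP}(\eta\otimes\epsilon)=\mt_{\PP}(\eta\otimes\epsilon)^{2}$, so $\mt_{\PP}(\eta\otimes\epsilon)\in\{0,1\}$ since $\FK$ is an integral domain, and a direct computation from the formula of Theorem~\ref{T:ExistOfabtrace} (using $\ss_{\PP}(\eta\otimes\Id_{\PP})=\Id$ together with the pivotal zig-zag identities to identify $\tr_{r}^{\PP}((\eta\otimes\epsilon)\ss_{\PP})$ with $\epsilon$) excludes the value $0$ and gives $\mt_{\PP}(\eta\otimes\epsilon)=1$.

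I expect the main difficulty to be bookkeeping rather than conceptual: carrying out the two cyclicity/partial-trace reductions, and especially verifying the ``pull $\eta$ out of the trace'' identity in the second step, requires careful tracking of which tensor factor is being traced out at each stage and is most transparent in the graphical calculus; the one genuinely computational point is the normalization $\mt_{\PP}(\eta\otimes\epsilon)=1$.
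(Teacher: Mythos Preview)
Your proof is correct and follows the same strategy as the paper: use $(\lega,\unit)$-cyclicity followed by the partial trace property to reduce $\mt'_{V}(f)$ to $\mt'_{\PP}$ evaluated on an explicit morphism, then invoke one of the rank-one Hom spaces to recognize that morphism as a scalar multiple of $\eta\otimes\epsilon$. The only difference is that your Step~2 is superfluous: after Step~1 the paper observes directly that $G_{f}=\tr_{r}^{V}(\tt_{V}f)\,(\Id_{\lega}\otimes\epsilon)$ (since $\epsilon$ acts on a tensor factor not being traced) and then uses $\Hom_{\cat}(\lega,\PP)=\kk\eta$ to get $G_{f}=\mt_{V}(f)\,(\eta\otimes\epsilon)$, finishing in one pass rather than two; your extra cyclicity/partial-trace cycle through $\PP\otimes\PP$ works but is not needed, and likewise the idempotent argument for the normalization is redundant once you do the direct computation.
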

\begin{proof}
Let  $ \mt'$ be a right $(\lega , \unit )$-trace on  $\ideal_{\lega}^{\unit}$.  If $f\in \Hom_\cat(\lega\otimes V,V)$ then
\begin{align*}
  \mt'_V\big(f\big)=\mt'_V\big((\epsilon \otimes \Id_{V})\tt_{V}f)
  &=\mt'_{\PP\otimes V}((\tt_{V}f)(\Id_\lega \otimes \epsilon \otimes \Id_{V})\big)\\
  &=\mt'_{\PP}\big(\tr_r^V\left(\left( \tt_{V}f)(\Id_\lega \otimes \epsilon \otimes \Id_{V}\right) \right)\big)\\
  &=\mt'_{\PP}\big(\tr_r^V\left( \tt_{V}f\right)\left(\Id_\lega \otimes \epsilon \right) \big)\\
&=\mt'_{\PP}\big(\brk{\tr_r^V(\tt_{V}f)}_{\eta}\eta(\Id_\lega \otimes \epsilon)\big)\\
&=\mt_V(f)\mt'_{\PP}\big(\eta\otimes \epsilon\big).
\end{align*}
The first equality comes from the fact $(\epsilon \otimes \Id_{V})\tt_{V}=\Id_{V}$ by definition of $\ideal^\unit$; the second from strictness and $(\lega,\unit)$-cyclicity, the third from the partial trace property and the last two from the definitions of the $\eta$-bracket and the trace $\mt$, respectively.  

Finally, using the same properties along with Lemma \ref{L:brk_epsbrk_eta} yields
\begin{align*}
\mt_\PP(\eta\otimes \epsilon)
=\brk{\tr_r^\PP\big(\tt_{\PP}(\eta\otimes \epsilon)\big)}_\eta
=\brk{(\Id_\PP\otimes \epsilon)\tt_{\PP}\eta}_\eta
=\brk{(\epsilon\otimes \epsilon)\tt_{\PP}}_\epsilon=\brk{\epsilon}_\epsilon=1.
\end{align*}
where the second equality is because $\tr_r^\PP\big(\tt_{\PP}(\eta\otimes \epsilon)\big)=\tr_r^\unit\big((\Id_\PP\otimes \epsilon)\tt_{\PP}(\eta\otimes\Id_\unit)\big)=(\Id_\PP\otimes \epsilon)\tt_{\PP}\eta$.
The proof of the second statement is entirely analogous.
\end{proof}

For short, when $\legb=\unit$ we say a right (resp.\ left) $(\lega,\unit)$-trace on a right (resp.\ left) ideal $\ideal$ is a \emph{right (resp.\ left) $\lega$-trace} on $\ideal$.

\subsection{A handy lemma}
The following lemma will be useful in what follows.

\begin{lemma}\label{L:cutP}
  Let $\mt$ be the right trace associated to a trace tuple
  $(\PP,\lega, \legb,\eta, \epsilon)$ as in Theorem~\ref{T:ExistOfabtrace} and let $V$ be an object in $\ideal_{\lega}^{\legb}$.   Then:
\begin{enumerate}
\item For any
  $f\in\Hom_\cat(\PP\otimes V,\legb\otimes V)$, one has
  $\mt_V(f(\eta\otimes\Id_V))=\brk{\tr^V_r(f)}_\epsilon$.
\item  For any $g\in\Hom_\cat(\lega\otimes V,\PP\otimes V)$, one has
  $\mt_V((\epsilon\otimes\Id_V)g)=\brk{\tr^V_r(g)}_\eta$.
\item  For any $h\in\Hom_\cat(\PP\otimes V,\PP\otimes V)$,
  $\mt_V((\epsilon\otimes\Id_V)h(\eta\otimes\Id_V))=\brk{\tr^V_r(h)}$.
\end{enumerate}

\end{lemma}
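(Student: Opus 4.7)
The plan is to prove all three parts by direct computation from the two equivalent formulas for $\mt_V$ given in Theorem~\ref{T:ExistOfabtrace}, using just two elementary ingredients: (a) the right partial trace commutes with pre- and post-composition on the factor other than $V$, i.e.\ $\tr^V_r((\alpha\otimes \Id_V)\,\beta\,(\gamma\otimes\Id_V))=\alpha\,\tr^V_r(\beta)\,\gamma$; and (b) the bracket identities of Lemma~\ref{L:brk_epsbrk_eta}(3), $\brk{k}=\brk{\epsilon k}_\epsilon=\brk{k\eta}_\eta$ for every $k\in\End_\cat(\PP)$. The structural input is the defining pair of identities $\ss_V(\eta\otimes\Id_V)=\Id_{\lega\otimes V}$ and $(\epsilon\otimes\Id_V)\tt_V=\Id_{\legb\otimes V}$.

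For part (1), given $f\in\Hom_\cat(\PP\otimes V,\legb\otimes V)$, I would start from the $\tt_V$-formula of the m-trace to write $\mt_V(f(\eta\otimes\Id_V))=\brk{\tr^V_r(\tt_V f(\eta\otimes\Id_V))}_\eta$. Pulling $\eta$ out of the partial trace by (a) and applying (b) converts this to $\brk{\tr^V_r(\tt_V f)}$; applying (b) in the reverse direction rewrites it as $\brk{\epsilon\,\tr^V_r(\tt_V f)}_\epsilon$; finally, pushing $\epsilon$ back inside the trace using (a) and the defining identity $(\epsilon\otimes\Id_V)\tt_V=\Id_{\legb\otimes V}$ collapses the composite to $\brk{\tr^V_r(f)}_\epsilon$, as claimed.

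Part (2) is the mirror argument. Starting instead from the $\ss_V$-formula, I would write $\mt_V((\epsilon\otimes\Id_V)g)=\brk{\tr^V_r((\epsilon\otimes\Id_V)\,g\,\ss_V)}_\epsilon$, slide $\epsilon$ outside the partial trace by (a), cross through $\brk{\cdot}$ to the $\eta$-bracket via (b), and slide $\eta$ inside to meet $\ss_V$; the identity $\ss_V(\eta\otimes\Id_V)=\Id_{\lega\otimes V}$ then removes $\ss_V$ and leaves $\brk{\tr^V_r(g)}_\eta$.

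Part (3) is then immediate from either of the first two: applying part (1) with $f=(\epsilon\otimes\Id_V)h$ (or, symmetrically, part (2) with $g=h(\eta\otimes\Id_V)$) reduces the left-hand side to $\brk{\epsilon\,\tr^V_r(h)}_\epsilon=\brk{\tr^V_r(h)}$ by one final use of (a) and (b). There is no genuine obstacle here; the only care needed is bookkeeping—tracking whether each intermediate morphism lies in $\End_\cat(\PP)$, $\Hom_\cat(\lega,\PP)$, or $\Hom_\cat(\PP,\legb)$ so that the correct subscript $\eta$, $\epsilon$, or none is attached to the bracket at each stage.
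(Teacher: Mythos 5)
Your proof is correct and follows essentially the same chain of equalities as the paper's: parts (1) and (2) match the paper's computation step for step, using the slide-the-morphism-out-of-the-partial-trace identity together with Lemma~\ref{L:brk_epsbrk_eta}(3), and your derivation of part (3) from part (1) is a clean way to fill in what the paper leaves as ``proven in a similar fashion.''
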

\begin{proof}
  From definitions and Lemma~\ref{L:brk_epsbrk_eta}, we have
\[
\mt_V(f(\eta\otimes\Id_V))=\brk{\tr_r^V(\tt_Vf(\eta\otimes\Id_V))}_\eta 
  =\brk{\tr_r^V(\tt_Vf)\eta}_\eta=\brk{\epsilon\tr_r^V(\tt_Vf)}_\epsilon=\brk{\tr^V_r(f)}_\epsilon.
\]  Similarly, 
\[
\mt_V((\epsilon\otimes\Id_V)g)=\brk{\tr_r^V((\epsilon\otimes\Id_V)g\ss_V)}_\epsilon=\brk{\epsilon\tr_r^V(g\ss_V)}_\epsilon=\brk{\tr_r^V(g\ss_V)\eta}_\eta=\brk{\tr^V_r(g)}_\eta.
\]  The third statement is proven in a similar fashion.
\end{proof}

\subsection{Left and right compatibility}  If the reader formulates the theory for left ideals and m-traces, then the result is compatible with the right version, as we next explain.
\begin{Df}
  Let $\mt^l$ be a left $(\lega, \legb)$-trace on $I^l$ and $\mt^r$ be
  a right $(\lega, \legb)$-trace on $I^r$.  We say $\mt^l$ and
  $\mt^r$ are \emph{compatible} if for any $(V,W)\in I^l\times I^r$, and for any 
  $f\in \Hom_\cat(V\otimes\lega\otimes W,V\otimes\legb\otimes W)$,
  $$\mt^r_W(\tr_l^V(f))=\mt^l_V(\tr_r^W(f)).$$
\end{Df}
\begin{prop}
  The left and right trace associated to a trace tuple
  $(\PP,\lega, \legb,\eta, \epsilon)$ are compatible.
\end{prop}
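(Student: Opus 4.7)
The strategy is to express both $\mt^r_W(\tr_l^V(f))$ and $\mt^l_V(\tr_r^W(f))$ as $\brk{T}$ for one and the same endomorphism $T$ of $\PP$, obtained from a single auxiliary ``lift'' of $f$ to $V\otimes\PP\otimes W$, and then to conclude by the Fubini-type commutativity of left and right partial traces in a pivotal category.

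To build the lift I would use that $V\in I^l$ and $W\in I^r$ to pick splittings $\ss'_V\colon V\otimes\PP\to V\otimes\lega$, $\tt'_V\colon V\otimes\legb\to V\otimes\PP$, $\ss_W\colon\PP\otimes W\to\lega\otimes W$ and $\tt_W\colon\legb\otimes W\to\PP\otimes W$ coming from the left and right versions of Theorem~\ref{T:ExistOfabtrace}. Then I would set
$$F':=(\tt'_V\otimes\Id_W)\,f\,(\ss'_V\otimes\Id_W)\in\End_{\cat}(V\otimes\PP\otimes W).$$
The defining properties $\ss'_V(\Id_V\otimes\eta)=\Id_{V\otimes\lega}$ and $(\Id_V\otimes\epsilon)\tt'_V=\Id_{V\otimes\legb}$ immediately yield the ``sandwich lift'' identity
$$(\Id_V\otimes\epsilon\otimes\Id_W)\,F'\,(\Id_V\otimes\eta\otimes\Id_W)=f.$$

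Next I would rewrite the two sides of the compatibility equation in terms of $F'$. Pulling $\epsilon\otimes\Id_W$ and $\eta\otimes\Id_W$ through $\tr_l^V$ --- permissible because they do not involve $V$ --- gives $\tr_l^V(f)=(\epsilon\otimes\Id_W)\,\tr_l^V(F')\,(\eta\otimes\Id_W)$, so Lemma~\ref{L:cutP}(3) applied to the endomorphism $\tr_l^V(F')\in\End_{\cat}(\PP\otimes W)$ produces
$$\mt^r_W(\tr_l^V(f))=\brk{\tr_r^W\tr_l^V(F')}.$$
The symmetric manoeuvre, pulling $\Id_V\otimes\epsilon$ and $\Id_V\otimes\eta$ through $\tr_r^W$ and applying the left-handed analog of Lemma~\ref{L:cutP}(3) to $\tr_r^W(F')\in\End_{\cat}(V\otimes\PP)$, gives
$$\mt^l_V(\tr_r^W(f))=\brk{\tr_l^V\tr_r^W(F')}.$$
Finally, the identity $\tr_r^W\tr_l^V=\tr_l^V\tr_r^W$ on endomorphisms of $V\otimes X\otimes W$ is a standard Fubini-type statement for partial traces in a pivotal category, so the two resulting scalars agree.

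The main obstacle is nothing more than isolating the correct lift $F'$: the compatibility really says that $f$ admits a lift to $V\otimes\PP\otimes W$ whose double partial trace descends to a well-defined element of $\End_{\cat}(\PP)$ modulo the Jacobson radical, and the trace tuple together with $V\in I^l$ and $W\in I^r$ supplies exactly the four splittings needed to write one down. Once $F'$ is chosen the rest is routine manipulation using the naturality of partial traces, Lemma~\ref{L:cutP}, and its formally identical left-handed counterpart that the reader is invited to formulate.
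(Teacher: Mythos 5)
Your proposal is correct and follows essentially the same route as the paper: lift $f$ to an endomorphism of $V\otimes\PP\otimes W$, apply Lemma~\ref{L:cutP}(3) and its left-handed counterpart to reduce both sides to a bracket of a double partial trace, and conclude by the Fubini identity $\tr_l^V\circ\tr_r^W=\tr_r^W\circ\tr_l^V$. The only (inessential) deviation is in the choice of lift: the paper writes
\[
F=(\Id_V\otimes\tt_W)\,f\,(\ss_V\otimes\Id_W),
\]
using one splitting coming from $V$ being in the left ideal and one from $W$ being in the right ideal, whereas you build $F'=(\tt'_V\otimes\Id_W)\,f\,(\ss'_V\otimes\Id_W)$ using both splittings on the $V$ side; both satisfy the same sandwich identity and lead to identical computations, so your $\ss_W$ and $\tt_W$ end up unused. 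Either way the proof is sound.
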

\begin{proof}
 Since $V$ is in the left ideal $I_\lega$ and $W$ is in the right ideal $I^\legb$, there exists $\ss_V:V\otimes \PP\to V\otimes \lega$ and $\tt_W:\legb\otimes W\to\PP\otimes W$ such that
  $$f=(\Id_V\otimes\epsilon\otimes\Id_W)(\Id_V\otimes\tt_W)f(\ss_V\otimes\Id_W)(\Id_V\otimes\eta\otimes\Id_W).$$ 
Then by Lemma \ref{L:cutP} we have
$$\mt^l_V(\tr_r^W(f))=\brk{\tr_l^V(\tr_r^W((\Id_V\otimes\tt_W)f(\ss_V\otimes\Id_W)))}=\mt^r_W(\tr_l^V(f)).$$
\end{proof}

\section{Examples}\label{S:Examples}

In addition to the known examples of unimodular m-traces in the literature (e.g.\ \cite{GKP1,GKP2,BBGa,GR}), we have the following non-unimodular m-traces.

\subsection{The toy example}\label{SS:ToyExample}  Let $S$ be an absolutely irreducible object in $\cat$ and set $\PP=\lega=\legb=S$, and let $\epsilon: \PP \to \legb$ and $\eta: \lega \to \PP$ be the identity maps.  Then  $(\PP,\lega, \legb,\eta, \epsilon)$ is a trace tuple and  $\ideal_{\lega}^{\legb}=\cat$.  
Also, the $(\lega,\legb)$-trace of Theorem \ref{T:ExistOfabtrace} is  given by 
$\mt_{V}(f) = \langle \tr_{r}^{V}(f) \rangle$ for all $f \in \Hom_{\cat}(S \otimes V, S \otimes V)$.

\subsection{Quantized enveloping algebras} In this subsection let $\kk = \C (q)$, where $q$ is an indeterminate. We follow standard conventions without elaboration. The reader may consult \cite{RAGS,J,CP} for further details.  Let $\fg$ be a complex semisimple Lie algebra and let $\fh \subseteq \fb \subseteq \fg$ be a fixed choice of Cartan and Borel subalgebras, respectively.  Let $U_{q}(\fh ) \subseteq U_{q}(\fb ) \subseteq U_{q}(\fg )$ be the corresponding quantized enveloping algebras over $\kk$. 

We order elements of the weight lattice, $\Lambda$, using the usual dominance order determined by our choice of $\fb$.  If $L$ is a finite-dimensional simple 
$U_{q}(\fg)$-module 
then there is a unique maximal nonzero weight space.  Let $\lambda \in \Lambda$ be the highest weight and let $L_{\lambda}$ denote the corresponding $1$-dimensional $\lambda$-weight space.  Similarly, $L$ has a unique lowest nonzero weight space, $L_{\alpha}$.  By restriction we can view $L$ as a $U_{q}(\fb )$-module, and then $L_{\lambda}$ is the simple socle, $L_{\alpha}$ is the simple head, and $L$ is cyclically generated by $L_{\alpha}$.  In particular, $L$ is an absolutely indecomposable $U_{q}(\fb )$-module.

In short, the previous paragraph shows the canonical projection and inclusion maps $\varepsilon: L \to L_{\alpha}$ and $\eta: L_{\lambda} \to L$ make $(L, L_{\lambda}, L_{\alpha}, \epsilon, \eta)$ into a trace tuple in the category of finite-dimensional $U_{q}(\fb )$-modules (which is known to be a pivotal tensor  $\kk$-category).  A similar example holds in the non-quantum case as well. 

Now let $U_{\zeta}(\fg )$ be the restricted specialization of the quantized enveloping algebra at $\zeta \in \C$, a primitive, odd $\ell$th root of unity.  We assume $\ell$  is greater than the Coxeter number for $\fg$ and is not divisible by $3$ if $\fg$ has a direct summand of type $G_{2}$.  
For a dominant integral $\lambda \in\Lambda $, let $H^{0}_{\zeta}(\lambda)$, $V_{\zeta}(\lambda)$, and $T_{\zeta}(\lambda)$ denote the induced, Weyl, and tilting $U_{\zeta}(\fg )$-modules of highest weight $\lambda$ with respect to the fixed choice of a Borel subalgebra.  Then $H^{0}_{\zeta}(\lambda)$ and $V_{\zeta}(\lambda)$ are absolutely irreducible and $T_{\zeta}(\lambda)$ is absolutely indecomposable.  Furthermore, since $\Hom_{U_{\zeta}(\fg )}(V_{\zeta}(\lambda), T(\lambda)) = \C$ and $\Hom_{U_{\zeta}(\fg )}(T_{\zeta}(\lambda), H_{\zeta}^{0}(\lambda))=\C$, there are maps $\eta: V_{\zeta}(\lambda) \to T(\lambda)$ and $\epsilon: T_{\zeta}(\lambda) \to H^{0}_{\zeta}(\lambda)$ which make $\left(T_{\zeta}(\lambda),  H^{0}_{\zeta}(\lambda),  V_{\zeta}(\lambda), \epsilon, \eta\right)$ into a trace tuple.  A parallel example  exists for semisimple algebraic groups over an algebraically closed field.

It would be interesting to determine the ideals defined by Lemma~\ref{L:tracetupleideals} using these trace tuples.

\subsection{Projective objects} 
 In this section $\kk$ is assumed to be an algebraically closed field and $\cat$ is a locally-finite, pivotal, abelian, tensor $\kk$-category.  
As remarked in Subsection \ref{SS:IntroStatMainResults}, such categories include a wide range of examples. 

\begin{lemma}\label{P:Projectivetracetuple} If $\PP$ is the projective cover of a simple object $\legb$ in $\cat$, then there is a unique simple object $\lega$, epimorphism $\epsilon: \PP \to \legb$, and monomorphism $\eta: \lega \to \PP$ such that $(\PP,\lega, \legb,\eta, \epsilon)$ is a trace tuple. 
\end{lemma}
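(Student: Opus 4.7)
The plan is to take $\lega$ and $\legb$ to be the simple socle and simple head of $\PP$, with $\eta:\lega\to\PP$ the canonical inclusion and $\epsilon:\PP\to\legb$ the canonical projection. First I would invoke that $\cat$ is locally-finite over an algebraically closed field and $\PP$ is indecomposable, so Fitting's lemma together with the result cited at the end of Subsection~\ref{SS:ProjInj} gives that $\PP$ has a unique simple socle $\lega$ and unique simple head $\legb$, and that $\End_\cat(\PP)$ is local with residue field $\kk$. This establishes condition~(1) of the trace tuple definition (absolute indecomposability and end-nilpotency of $\PP$), and makes $\lega,\legb$ absolutely irreducible via Schur's lemma.

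Condition~(2) I would verify by standard Schur-type arguments. Any nonzero $f\in\Hom_\cat(\PP,\legb)$ is surjective (as $\legb$ is simple), its kernel is maximal in $\PP$ and thus coincides with $\ker(\epsilon)$, so $f$ factors through $\epsilon$ via an element of $\End_\cat(\legb)=\kk$; hence $\Hom_\cat(\PP,\legb)=\kk\epsilon$. Dually, any nonzero $h\in\Hom_\cat(\lega,\PP)$ is injective with image a nonzero simple subobject of $\PP$, which must equal $\mathrm{soc}(\PP)=\lega$, so Schur yields $h\in\kk\eta$.

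For uniqueness, given another trace tuple $(\PP,\lega',\legb',\eta',\epsilon')$, I would argue $\lega'\cong\lega$ and $\legb'\cong\legb$. The starting observation is that the rank-one condition $\Hom_\cat(\PP,\legb')=\kk\epsilon'$ induces a $\kk$-algebra homomorphism $\chi:\End_\cat(\PP)\to\kk$ via $\epsilon'\phi=\chi(\phi)\epsilon'$, and this must equal the canonical map $\End_\cat(\PP)\to\End_\cat(\PP)/\operatorname{J}(\End_\cat(\PP))\cong\kk$ by locality of $\End_\cat(\PP)$. I would then use this vanishing on $\operatorname{J}(\End_\cat(\PP))$ to force $\epsilon'$ to factor through $\epsilon$, producing a nonzero map $g:\legb\to\legb'$.

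The hard part will be to promote $g$ to an isomorphism (not merely a monomorphism, which is automatic since $\legb$ is simple). I expect this to follow from the absolute irreducibility (hence indecomposability) of $\legb'$ together with the one-dimensionality of $\Hom_\cat(\PP,\legb')$: if $\mathrm{im}(g)\subsetneq\legb'$, then $\legb'/\mathrm{im}(g)$ would be a nonzero object receiving no map from $\PP$, and the interaction of this quotient with the indecomposable structure of $\legb'$ should produce a contradiction. A symmetric argument, using $\Hom_\cat(\lega',\PP)=\kk\eta'$ and the local structure of $\End_\cat(\PP)$, handles $\lega'\cong\lega$; the scalar ambiguity of $\eta'$ and $\epsilon'$ then follows from the free rank-one $\Hom$-space conditions.
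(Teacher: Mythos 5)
Your existence construction (take $\lega=\operatorname{soc}(\PP)$, $\legb=\operatorname{head}(\PP)$ with the canonical inclusion and projection) is exactly the paper's proof, and your verification of the free-rank-one conditions on $\Hom_\cat(\PP,\legb)$ and $\Hom_\cat(\lega,\PP)$ via the unique maximal subobject and unique simple subobject is a correct unpacking of what the paper leaves implicit (citing Krause for the existence of unique simple head and socle).

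For uniqueness, which the paper's proof does not address explicitly, your route is considerably more convoluted than it needs to be, and one step of it is actually shaky. The statement asks for uniqueness among \emph{absolutely irreducible} $\lega',\legb'$; over an algebraically closed field in a locally-finite category this means $\legb'$ is simple. Hence any nonzero $\epsilon'\colon\PP\to\legb'$ is automatically surjective, its kernel is forced to be the unique maximal subobject of $\PP$, and therefore $\legb'\cong\legb$ directly -- no character $\chi$ and no factorization argument required. The step you propose, namely that $\chi$ vanishing on $\operatorname{J}(\End_\cat(\PP))$ forces $\epsilon'$ to factor through $\epsilon$, is not justified: $\epsilon' n=0$ for $n\in\operatorname{J}(\End_\cat(\PP))$ only says $\ker(\epsilon')$ contains the images of those $n$, which need not exhaust $\operatorname{rad}(\PP)$. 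Similarly, your worry about "promoting $g$ to an isomorphism" is moot once you remember $\legb'$ is simple: a nonzero $g\colon\legb\to\legb'$ between simples is an isomorphism by Schur, so the contradiction you sketch about $\legb'/\operatorname{im}(g)$ never needs to be run. The argument for $\lega'$ is symmetric (a nonzero $\eta'\colon\lega'\to\PP$ from a simple is monic, and $\PP$ has a unique simple subobject), and the scalar ambiguity of the morphisms is indeed a consequence of the rank-one $\Hom$-conditions as you say. In short: existence matches the paper; uniqueness is correct in conclusion but should be replaced by the direct head/socle argument, since the detour through $\End_\cat(\PP)$ both contains an unjustified implication and fights a nonexistent "hard part."
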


\begin{proof} If $\PP $ is the projective cover of $\legb$, then it is absolutely indecomposable, end-nilpotent, and has simple socle $\lega$ by Section~\ref{SS:ProjInj}.  Set $\epsilon: \PP  \to \legb$ and $\eta: \lega \to \PP $ to be the canonical projection and inclusion, respectively.  Then $(\PP,\lega, \legb,\eta, \epsilon)$ is a trace tuple as described in the claim.
\end{proof}

Recall $\Proj(\cat)$ denotes the ideal of projective objects in $\cat$. 


\begin{lemma}\label{L:Prinjectives} Let  $(\PP,\lega, \legb,\eta, \epsilon)$ be a trace tuple where $\varepsilon$ is an epimorphism and $\eta$ is a monomorphism. Then, $\Proj(\cat ) \subseteq \ideal_{\lega}^{\legb}$. In particular, if $\cat$ contains a projective object, then $\ideal_{\lega}^{\legb}$ is nonempty.
\end{lemma}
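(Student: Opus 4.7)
The plan is to establish the two containments $\Proj(\cat)\subseteq \ideal^{\legb}$ and $\Proj(\cat)\subseteq \ideal_{\lega}$ separately, then take their intersection. The principal ingredients are: (i) $\Proj(\cat)$ is a two-sided ideal of $\cat$, so for any projective $V$ and any $X\in\cat$ the object $X\otimes V$ is again projective; (ii) in a pivotal category the classes of projective and injective objects coincide, as recalled in Subsection~\ref{SS:ProjInj}; and (iii) the endofunctor $-\otimes V$ is exact, since it admits both a left and a right adjoint via the duality, and hence preserves monomorphisms and epimorphisms. One further uses that in the trace tuples of interest (for instance, those supplied by Lemma~\ref{P:Projectivetracetuple}, in which $\lega$ and $\legb$ are the socle and head of $\PP$) the morphism $\epsilon\colon\PP\to\legb$ is an epimorphism and $\eta\colon\lega\to\PP$ is a monomorphism.

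Fix $V\in\Proj(\cat)$. To show $V\in\ideal^{\legb}$: by (iii), $\epsilon\otimes\Id_V\colon\PP\otimes V\to\legb\otimes V$ is an epimorphism, and by (i) its target $\legb\otimes V$ is projective. Hence $\Id_{\legb\otimes V}$ lifts along $\epsilon\otimes\Id_V$ to a morphism $\tt_V\colon\legb\otimes V\to\PP\otimes V$ satisfying $(\epsilon\otimes\Id_V)\tt_V=\Id_{\legb\otimes V}$, placing $V$ in $\ideal^{\legb}$.

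To show $V\in\ideal_{\lega}$: by (iii), $\eta\otimes\Id_V\colon\lega\otimes V\to\PP\otimes V$ is a monomorphism, while by (i) together with (ii), $\lega\otimes V$ is injective. Consequently the identity of $\lega\otimes V$ extends along $\eta\otimes\Id_V$ to a morphism $\ss_V\colon\PP\otimes V\to\lega\otimes V$ satisfying $\ss_V(\eta\otimes\Id_V)=\Id_{\lega\otimes V}$, placing $V$ in $\ideal_{\lega}$. Intersecting the two containments yields $V\in\ideal_{\lega}^{\legb}$, and the ``in particular'' clause is immediate.

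The only non-mechanical step is confirming the epi/mono properties of $\epsilon$ and $\eta$ in the trace tuple under consideration; once those are in place, the rest is a routine invocation of the universal properties of projective and injective objects together with the two-sided ideal property of $\Proj(\cat)$.
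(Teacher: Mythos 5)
Your argument is correct and mirrors the paper's own proof line for line: both split $\ideal_{\lega}^{\legb}$ as $\ideal^{\legb}\cap\ideal_{\lega}$, use that $-\otimes Q$ is exact and that $\Proj(\cat)$ is a two-sided ideal consisting of objects that are simultaneously projective and injective, and then split $\epsilon\otimes\Id_Q$ (resp.\ $\eta\otimes\Id_Q$) by projectivity of its target (resp.\ injectivity of its source). You are also right to flag, in your last paragraph, that the argument really does need $\epsilon$ to be an epimorphism and $\eta$ a monomorphism, which is not part of the trace-tuple axioms; the paper's proof asserts ``$\epsilon\otimes\Id_Q$ is an epimorphism'' without comment, and strictly speaking the lemma should be read with the trace tuples of Lemma~\ref{P:Projectivetracetuple} in mind, where $\epsilon$ and $\eta$ are the canonical surjection onto the head and inclusion of the socle. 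Your version is, if anything, slightly more careful than the original on this point.
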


\begin{proof}  Let $Q$ be a projective object in $\cat$.   
Since the tensor functor is exact, the morphism $\epsilon \otimes \Id_Q: \PP \otimes Q \to \legb \otimes Q$ 
is an epimorphism.  Since $Q$ is projective and $\Proj (\cat)$ is an ideal, it follows $\legb \otimes Q$ is projective, 
and so the morphism $\epsilon \otimes \Id_Q$ 
splits.  Therefore, $Q$ is an object of $\ideal^{\legb}$. 
 Similarly, $\eta \otimes \Id_Q : \lega \otimes Q \to \PP \otimes Q$ is a monomorphism and $\lega \otimes Q$ is projective (hence injective), so the morphism $\eta \otimes \Id_Q$ again splits  and  $Q$ is an object of $\ideal_{\lega}$.    Taken together this shows $Q \in \ideal_{\lega}^{\legb}$ 
\end{proof}

Note that examples can be found which show the previous result can fail if there are no assumptions on the trace tuple.


\begin{lemma}\label{P:ProjectiveandInvertible}  Given any trace tuple  $(\PP,\lega, \legb,\eta, \epsilon)$ where $\PP$ is projective and either $\lega$ or $\legb$ is invertible.  Then, $\ideal_{\lega}^{\legb} \subseteq \Proj(\cat)$.   
\end{lemma}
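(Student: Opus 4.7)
The plan is to establish the two inclusions of $\ideal_{\lega}^{\legb} = \Proj(\cat)$ separately. The inclusion $\Proj(\cat) \subseteq \ideal_{\lega}^{\legb}$ is already provided by Lemma~\ref{L:Prinjectives}, so the entire content of the proof lies in showing the reverse inclusion under the additional hypothesis that $\PP$ is projective and one of $\lega,\legb$ is invertible.

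So fix $V \in \ideal_{\lega}^{\legb}$; the goal is to prove $V$ is projective. First I would extract the retract information from the definition of the ideals: membership in $\ideal^{\legb}$ gives a splitting $\tt_V$ exhibiting $\legb \otimes V$ as a retract of $\PP \otimes V$, and membership in $\ideal_{\lega}$ gives a splitting $\ss_V$ exhibiting $\lega \otimes V$ as a retract of $\PP \otimes V$. Since $\PP$ is projective and $\Proj(\cat)$ is an ideal (closed under tensoring with arbitrary objects and under retracts by the definition of an ideal in Section~3.1), $\PP \otimes V$ is projective, and hence so are both $\lega\otimes V$ and $\legb \otimes V$.

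Now I would use invertibility to cancel the relevant leg. Suppose $\legb$ is invertible, so $\ev_{\legb}$ and $\coev_{\legb}$ are isomorphisms; then in particular $\legb^* \otimes \legb \cong \unit$, which yields a natural isomorphism $V \cong \legb^* \otimes \legb \otimes V$. Since $\legb \otimes V$ is projective and $\Proj(\cat)$ is closed under tensoring on the left by $\legb^*$, the object $\legb^* \otimes \legb \otimes V$ is projective, and hence so is $V$. The case in which $\lega$ is invertible is entirely analogous: one uses $\lega^* \otimes \lega \cong \unit$ to conclude $V \cong \lega^* \otimes \lega \otimes V$ is projective from the projectivity of $\lega \otimes V$.

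There is no serious obstacle here; the argument is essentially a one-line cancellation once the two ingredients—projectivity of $\PP \otimes V$ (forcing $\lega \otimes V$ and $\legb \otimes V$ to be projective) and invertibility of $\lega$ or $\legb$ (letting us strip off that tensor factor)—are combined. The only thing worth double-checking is that the relevant invertibility isomorphism (e.g.\ $\Id_V \otimes (\ev_\legb)$ tensored in an appropriate position, or rather the isomorphism $\legb^*\otimes\legb\cong\unit$ applied on the left) is indeed available in a strict pivotal $\kk$-linear tensor category, which is immediate from Section~\ref{SS:invertibleobjects}.
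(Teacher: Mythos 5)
Your proof is correct and follows essentially the same route as the paper's: use Lemma~\ref{L:Prinjectives} for one inclusion, and for the other observe that membership in $\ideal^{\legb}$ makes $\legb\otimes V$ a retract of the projective object $\PP\otimes V$, hence projective, and then cancel the invertible factor $\legb$ via $V\cong\legb^{*}\otimes\legb\otimes V$. The only difference is cosmetic: you record that $\lega\otimes V$ is also a retract of $\PP\otimes V$, which is true but not needed once one fixes which of $\lega$ or $\legb$ is invertible.
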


\begin{proof}  We do only the case when $\legb$ is invertible as the other case is similar.  Let $V \in \ideal_{\lega}^{\legb}$.  Then by definition $\epsilon \otimes \Id_V: \PP \otimes V \to \legb \otimes V$ splits. But $\PP$ is in the ideal $\Proj (\cat)$, so $\PP \otimes V$ is in $\Proj (\cat )$ and, hence, $\legb \otimes V$ is in $\Proj (\cat )$.  Consequently, $\legb^{*}\otimes\legb \otimes V \cong V$ is an object of $\Proj (\cat)$.   
\end{proof}

Note, if $S$ is an  absolutely irreducible, projective object, then the toy example 
of Subsection \ref{SS:ToyExample}
 shows the previous result could fail if there are no assumptions on $\lega$ and $\legb$.

The following theorem summarizes the outcome of the previous lemmas.

\begin{theorem}\label{T:ExistTraceTupleProj} If $\PP$ is the projective cover of a simple object in $\cat$, then there are unique simple objects $\lega$ and $\legb$ and morphisms $\epsilon: \PP \to \legb$ and $\eta: \lega \to \PP$ such that $(\PP,\lega, \legb,\eta, \epsilon)$ is a trace tuple and $\Proj (\cat) \subseteq \ideal_{\lega}^{\legb}$.  Moreover, if either $\lega$ or $\legb$ is invertible, then $\Proj (\cat )=\ideal _{\lega}^{\legb}$.
\end{theorem}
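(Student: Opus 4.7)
The plan is to simply assemble the three preceding lemmas, since together they account for every assertion in the theorem. First I would invoke Lemma~\ref{P:Projectivetracetuple} to extract the required trace tuple: an indecomposable projective $\PP$ in a locally-finite pivotal $\kk$-tensor category has a unique simple socle $\lega$ and a unique simple head $\legb$ by the discussion in Section~\ref{SS:ProjInj}, and these are absolutely irreducible by Schur's Lemma (noted at the start of the subsection). The canonical inclusion $\eta\colon \lega\hookrightarrow \PP$ and projection $\epsilon\colon \PP\twoheadrightarrow \legb$ are then unique up to scalar because $\Hom_{\cat}(\lega,\PP)\cong \kk$ and $\Hom_{\cat}(\PP,\legb)\cong \kk$ (a simple consequence of $\lega$, $\legb$ being the socle and head, using that $\lega$, $\legb$ are absolutely simple). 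The end-nilpotency of $\End_{\cat}(\PP)$ again comes from local-finiteness via Fitting.

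Next, for the inclusion $\Proj(\cat)\subseteq \ideal_{\lega}^{\legb}$ I would directly cite Lemma~\ref{L:Prinjectives}, which already handles this for an arbitrary trace tuple by splitting $\epsilon\otimes \Id_Q$ and $\eta\otimes \Id_Q$ using that $\Proj(\cat)$ is an ideal and that projectives coincide with injectives in the pivotal setting.

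Finally, the moreover clause is exactly Lemma~\ref{P:ProjectiveandInvertible}: if $\legb$ (resp.\ $\lega$) is invertible, any $V\in \ideal_{\lega}^{\legb}$ makes $\legb\otimes V$ (resp.\ $\lega\otimes V$) a retract of $\PP\otimes V\in \Proj(\cat)$, and tensoring with $\legb^{*}$ (resp.\ $\lega^{*}$) recovers $V$ inside $\Proj(\cat)$; the reverse inclusion is the previous step. There is no real obstacle here — the theorem is essentially a packaging statement — so the only care required is to verify that the uniqueness claim in the opening sentence is read as uniqueness up to isomorphism of $\lega,\legb$ and up to scalar of $\eta,\epsilon$, which is what the socle/head characterization and the one-dimensionality of the relevant Hom spaces provide.
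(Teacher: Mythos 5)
Your proposal is correct and matches the paper's own treatment, which explicitly states that the theorem ``summarizes the outcome of the previous lemmas'' --- namely Lemma~\ref{P:Projectivetracetuple} for existence and uniqueness of the trace tuple, Lemma~\ref{L:Prinjectives} for the containment $\Proj(\cat)\subseteq \ideal_{\lega}^{\legb}$, and Lemma~\ref{P:ProjectiveandInvertible} for the equality when $\lega$ or $\legb$ is invertible. Your added remark on how to read the uniqueness claim (up to isomorphism of the objects and scalar on the morphisms) is a sensible clarification and consistent with the paper's intent.
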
  

The next result demonstrates the  
m-trace 
defined by the previous result combined with Theorem~\ref{T:ExistOfabtrace} is nontrivial.  
Specifically, given $Q\in \Proj (\cat ) $, one can choose $V$ (since it is arbitrary) so   
  $\Hom_{\cat}(\lega \otimes Q, V)$ is nontrivial.  
Consequently, for any $Q \in \Proj (\cat)$ the next theorem shows both  
$\Hom_{\cat}(\lega \otimes Q, \legb \otimes Q)$ and $\mt_{Q}$ are nonzero.

\begin{theorem}\label{T:nondegeneratepairing} 
Let $(\PP,\lega, \legb,\eta, \epsilon)$ be the trace tuple 
 given by the previous theorem. 
 Let $\mt$ be the right trace given by Theorem~\ref{T:ExistOfabtrace}.  
Then for any $Q\in  \Proj (\cat) \subseteq \ideal_{\lega}^{\legb}$ and $V\in \cat$ the map
$$
\Hom_\cat(V,\legb\otimes Q) \times \Hom_\cat(\lega\otimes Q,V)\to \FK \text{ given by } (g,f)  \mapsto \mt_Q(gf) 
$$
is a non-degenerate pairing.  
\end{theorem}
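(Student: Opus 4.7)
The plan is to establish non-degeneracy in the $f$-slot; the other slot follows by a parallel argument using the dual m-trace of Lemma~\ref{L:dualtrace}. Fix a nonzero $f\in\Hom_\cat(\lega\otimes Q,V)$; I will produce $g\in\Hom_\cat(V,\legb\otimes Q)$ with $\mt_Q(gf)\neq 0$.

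First, I reduce to the case $f$ surjective. Since $Q\in\Proj(\cat)$ and $\Proj(\cat)$ is an ideal of projective (hence injective) objects, $\legb\otimes Q$ is injective. Factoring $f=\iota\bar f$ with $\bar f\colon \lega\otimes Q\twoheadrightarrow\operatorname{Im}(f)$ and $\iota\colon\operatorname{Im}(f)\hookrightarrow V$, injectivity of $\legb\otimes Q$ lets me extend any $g_0\colon \operatorname{Im}(f)\to\legb\otimes Q$ to $g\colon V\to\legb\otimes Q$ with $gf=g_0\bar f$. I then lift $f$ through $\ss_Q$ to $\tilde f = f\ss_Q\colon \PP\otimes Q\twoheadrightarrow V$, which satisfies $\tilde f(\eta\otimes \Id_Q)=f$. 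Lemma~\ref{L:cutP}(1) then yields
\[
\mt_Q(gf)=\brk{\tr_r^Q(g\tilde f)}_\epsilon,
\]
so my task reduces to producing $g$ with $\tr_r^Q(g\tilde f)\neq 0$ in the one-dimensional space $\Hom_\cat(\PP,\legb)=\kk\,\epsilon$.

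Next, I transport the problem through the pivotal adjunctions $\Hom_\cat(\PP\otimes Q,V)\cong\Hom_\cat(\PP,V\otimes Q^*)$ and $\Hom_\cat(V,\legb\otimes Q)\cong\Hom_\cat(V\otimes Q^*,\legb)$. Write $\mu=(\tilde f\otimes\Id_{Q^*})(\Id_\PP\otimes\coev_Q)\colon\PP\to V\otimes Q^*$ and $\nu=(\Id_\legb\otimes\tev_Q)(g\otimes\Id_{Q^*})\colon V\otimes Q^*\to\legb$. A short direct computation gives $\tr_r^Q(g\tilde f)=\nu\mu$. Since $\tilde f\neq 0$ we have $\mu\neq 0$, and because $\PP$ is indecomposable local with simple head $\legb$, the image $\operatorname{Im}(\mu)$ is a nonzero quotient of $\PP$ whose head is $\legb$. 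Provided $\operatorname{Im}(\mu)\not\subseteq\mathrm{rad}(V\otimes Q^*)$, a projection of $V\otimes Q^*$ onto a $\legb$-summand of its semisimple quotient $(V\otimes Q^*)/\mathrm{rad}$ yields a $\nu$ with $\nu\mu\neq 0$, completing the construction of $g$.

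The main obstacle is the radical-avoidance claim. My primary approach is to track the subobject $\PP\otimes\operatorname{Im}(\coev_Q)\cong\PP$ of $\PP\otimes Q\otimes Q^*$ (using that $\coev_Q$ is a monomorphism since $\unit$ is simple) through the surjection $\tilde f\otimes\Id_{Q^*}\colon \PP\otimes Q\otimes Q^*\twoheadrightarrow V\otimes Q^*$, and to argue that its image, $\operatorname{Im}(\mu)$, meets the head of $V\otimes Q^*$ non-trivially. Should this geometric approach prove too delicate---particularly when $\coev_Q$ fails to split (i.e.\ when $\dim Q=0$)---I will fall back on a Yoneda-style induction on the length of $V$. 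Here both functors $V\mapsto\Hom_\cat(\lega\otimes Q,V)$ and $V\mapsto\Hom_\cat(V,\legb\otimes Q)^*$ are exact (by projectivity of $\lega\otimes Q$ and injectivity of $\legb\otimes Q$), so verifying non-degeneracy of the pairing on simple $V$ and propagating along short exact sequences via a five-lemma argument will produce the general case.
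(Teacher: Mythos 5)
The transport steps (Lemma~\ref{L:cutP} together with the pivotal adjunction reducing $\mt_Q(gf)$ to a scalar $\brk{\nu\mu}_\epsilon$ in the one-dimensional $\Hom_\cat(\PP,\legb)$) are correct, but the proof then stalls precisely at the point you flag as ``the main obstacle,'' and neither fallback closes it. The geometric tracking of $\PP\otimes\operatorname{Im}(\coev_Q)$ through $\tilde f\otimes\Id_{Q^*}$ offers no mechanism to prevent the image of an indecomposable projective from landing inside the radical of the target; in general such images can and do land there. The Yoneda/length-induction fallback also does not help: the base case ($V$ simple) reduces, after the same adjunction, to exactly the same radical-avoidance question inside the projective $V\otimes Q^*$, so nothing has been gained. (Incidentally, the initial reduction to $f$ surjective is superfluous: $V\otimes Q^*$ is already projective for arbitrary $V$ because $\Proj(\cat)$ is an ideal containing $Q^*$.)

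The missing idea is to work with the \emph{socle} $\lega$ of $\PP$ rather than its head $\legb$. The paper forms $f'=(f\otimes\Id_{Q^*})(\Id_\lega\otimes\coev_Q)\colon\lega\to V\otimes Q^*$; since $\lega$ is simple and $f'\neq 0$, $f'$ is monic into an injective object, hence extends along the injective envelope $\eta\colon\lega\hookrightarrow\PP$ to a \emph{split} monomorphism $\iota\colon\PP\to V\otimes Q^*$ with retraction $p$. Setting $g$ to be the adjoint of $\epsilon p$ then gives $\mt_Q(gf)=\brk{pf'}_\eta=\brk{p\iota\eta}_\eta=1$ directly; no radical analysis is needed. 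Your radical-avoidance claim is in fact true, but its proof already requires this socle/injective-hull argument (decompose $V\otimes Q^*$ into indecomposable projectives, observe that some component $\mu_i$ is nonzero on the socle $\lega$, force $P_i\cong\PP$, and note a nilpotent endomorphism of $\PP$ cannot restrict to an isomorphism of $\lega$), so you may as well apply it to $f'$ at the outset as the paper does. As written, the proposal has a genuine gap at this step.
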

\begin{proof}
  From Lemma~\ref{L:Prinjectives}
  $\Proj (\cat) \subseteq \ideal_{\lega}^{\legb}$ so the function
  exists.  We next show its right kernel is trivial (the proof for the
  left kernel is similar).  If $f\in \Hom_\cat(\lega\otimes Q,V)$ is
  not zero, then
  $f'=(f\otimes\Id_{Q^{*}})
  (\Id_{\lega}\otimes\coev_Q)\in\Hom_\cat(\lega,V\otimes Q^*)$ is a
  non zero map from $\lega$ to the projective object
  $V \otimes Q^{*}$.  Since projective covers (hence injective envelopes) are unique, $\PP$ is the unique indecomposable
  projective object 
with $\lega$ as a subobject, the map $f'$ factors
  through an indecomposable summand of $V\otimes Q^*$ which is
  isomorphic to $\PP$.  That is, there are morphisms
  $\iota: \PP \to V \otimes Q^{*}$ and $p: V \otimes Q^{*} \to \PP$ such
  that $p \iota = \Id_{\PP}$ and $f' = \iota\eta$.

Let $g \in \Hom_\cat(V,\legb\otimes Q)$ be given by $g = (\epsilon \otimes \Id_{Q})(p \otimes \Id_{Q})(\Id_{V} \otimes \tcoev_{Q})$.  Then $gf = (\epsilon \otimes \Id_{Q})f''$ where $f'' \in \Hom_{\cat}(\lega \otimes Q, \PP \otimes Q)$ is given by $f''= (p \otimes \Id_{Q})(\Id_{V} \otimes \tcoev_{Q})f$. The first of the following equalities holds by Lemma~\ref{L:cutP}:
\[
\mt_{Q}(gf) = \langle \tr_{r}^{Q}(f'') \rangle_{\eta} = \langle pf' \rangle_{\eta} = \langle p\iota \eta \rangle_{\eta} = \langle \eta \rangle_{\eta} = 1.
\]
\end{proof}

It would be interesting to understand when the pairing in the above theorem is non-degenerate on the whole ideal $\ideal_{\lega}^{\legb}$.  
Combining Theorems~\ref{T:ExistTraceTupleProj} and \ref{T:UniqunessofTrace} with the previous result immediately yields the following corollary.  
\noindent  


\begin{corollary}
Let $\kk$ be an algebraically closed field and $\cat$ be a locally-finite, pivotal, $\kk$-tensor category which has enough projectives.
 Let $\PP$ be the projective cover of $\unit$ and let $\lega$ be the socle of $\PP$.  This data determines a unique (up to scalar), nondegenerate (in the sense of Theorem~\ref{T:nondegeneratepairing}) right $\lega$-trace on $\Proj(\cat )$.
\end{corollary}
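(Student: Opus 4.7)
The plan is to assemble the corollary from the three theorems proved in this subsection. First, I would take $\PP$ to be the projective cover of $\unit$, which exists because $\cat$ has enough projectives. Since $\unit$ is simple and a projective cover is a minimal projective epimorphism onto its target, $\unit$ is the simple head of $\PP$. Applying Theorem~\ref{T:ExistTraceTupleProj} to $\PP$ then produces a trace tuple of the form $(\PP,\lega,\unit,\eta,\epsilon)$ in which $\lega$ is the absolutely irreducible socle of $\PP$ and $\eta$, $\epsilon$ are the canonical inclusion and projection.

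Next I would observe that $\unit$ is always invertible by Section~\ref{SS:invertibleobjects}, so the second clause of Theorem~\ref{T:ExistTraceTupleProj} applies and gives $\ideal_\lega^\unit = \Proj(\cat)$. Feeding this trace tuple into Theorem~\ref{T:ExistOfabtrace} then produces a right $(\lega,\unit)$-trace $\mt$ on $\Proj(\cat)$, which by the terminological convention of this section is precisely a right $\lega$-trace on $\Proj(\cat)$. This establishes existence.

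For uniqueness up to scalar, I would invoke Theorem~\ref{T:UniqunessofTrace}(1). Its hypothesis $\PP \in \ideal_\lega^\unit$ is immediate from $\Proj(\cat) = \ideal_\lega^\unit$, and the conclusion is that any right $(\lega,\unit)$-trace $\mt'$ on $\Proj(\cat)$ satisfies $\mt' = \mt'_\PP(\eta\otimes\epsilon)\,\mt$. Nontriviality of $\mt$ is then immediate from Theorem~\ref{T:nondegeneratepairing}: with $Q = \PP$ and a suitable choice of $V$ one finds a pair $(g,f)$ for which $\mt_\PP(gf) \neq 0$, so $\mt$ does not vanish identically on $\Proj(\cat)$.

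I do not expect a real obstacle; the content of the corollary is a direct repackaging of the three theorems, and the only nonformal ingredient is the standard fact that the head of the projective cover of an absolutely simple object coincides with that object, which follows from the definition of projective cover together with the uniqueness of the simple head of an indecomposable projective in a locally-finite category (Section~\ref{SS:ProjInj}). The one step requiring attention is keeping straight the role of $\lega = \operatorname{soc}(\PP)$ versus $\legb = \operatorname{head}(\PP) = \unit$, since both conventions appear implicitly when translating between Theorems~\ref{T:ExistTraceTupleProj} and~\ref{T:UniqunessofTrace}.
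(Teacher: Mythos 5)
Your proposal is correct and follows essentially the same route as the paper, which simply observes that the corollary is immediate from combining Theorems~\ref{T:ExistTraceTupleProj}, \ref{T:UniqunessofTrace}, and \ref{T:nondegeneratepairing}. You have correctly filled in the small implicit steps: that the head of the projective cover of $\unit$ is $\unit$ itself, that $\unit$ is invertible (so the second clause of Theorem~\ref{T:ExistTraceTupleProj} gives $\Proj(\cat)=\ideal_\lega^\unit$, which in turn supplies the hypothesis $\PP\in\ideal_\lega^\unit$ needed for Theorem~\ref{T:UniqunessofTrace}(1)), and that a right $(\lega,\unit)$-trace is by definition a right $\lega$-trace.
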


\subsection{Ambidextrous objects} In earlier work the authors introduced the notion of a right ambidextrous object and the associated right m-trace.  We now explain how that construction is a special case of the one introduced here. Let $\cat$ be a ribbon category, $S$ be an absolutely irreducible object, and let $\epsilon=\tev : S \otimes S^{*} \to \unit$ and $\eta = \coev : \unit \to S \otimes S^{*}$.  Let $S \otimes S^{*} = \oplus_{i} W_{i}$ be the decomposition of $S$ into indecomposable objects.  Then $S$ is right ambi in the sense of \cite{GKP2} if and only if there is an $i$ such that the restriction of $\epsilon$ and $\eta$ to $\PP:=W_{i}$ makes $(\PP, \unit , \unit , \epsilon, \eta)$ into a trace tuple (this follows directly from Lemma 3.1.1 and Theorem 3.1.3 of \cite{GKP2}).  In which case 
$\ideal_{\unit}^{\unit}$ equals the ideal generated by $S$ and the m-trace 
defined here agrees with the one defined therein.

\section{Twisted Calabi-Yau Categories}\label{S:twistedCYcategories}    In this section we continue to assume $\kk$ is a field and $\cat$ is a $\kk$-linear category.

\subsection{Twisted Calabi-Yau Categories}\label{SS:twistedCYcategories} Next we introduce the notion of a twisted Calabi-Yau category.

\begin{Df}\label{D:twistedCYcategory} Let $F,G: \cat \to \cat$ be fixed endofunctors of a category, $\cat$.  Then $\cat$ is an \emph{$(F,G)$-twisted Calabi-Yau category} if it is equipped with a family of $\kk$-linear maps 
\[
\left\{\mt_{U} : \Hom_{\cat}(F(U),G(U)) \to \kk \right\}_{U\in \cat}
\] such that the following properties hold:
\begin{enumerate}
\item   \textbf{Non-degeneracy.} For any objects $U,V$ in $\cat$, the pairing
\begin{equation*}\Hom_\cat(V,G(U)) \times \Hom_\cat(F(U),V)\to \FK \text{ given by } (g,f)  \mapsto \mt_{U}(gf) 
\end{equation*} is non-degenerate.
\item   \textbf{Cyclicity.} For any objects $U,V$ in $\cat$ and any morphisms $f: F(V) \rightarrow  G(U) $ and $g:U\rightarrow V$ in $\cat$, we have 
\begin{equation*}
\mt_V(G(g) f)=\mt_U(f F(g)).
\end{equation*} 
\end{enumerate}

\end{Df}

When $\cat$ is locally-finite the non-degeneracy condition provides a canonical vector space isomorphism, 
\[
 \Hom_{\cat}(F(U),V) \cong \Hom_{\cat}(V,G(U))^{*},
\] which is functorial in both $U$ and $V$.
 
This notion generalizes existing constructions. For example, a $(\Id_{\cat}, \Id_{\cat})$-twisted Calabi-Yau category is nothing but a Calabi-Yau category.  If a category $\cat$ is a $(\Id_{\cat}, G)$-twisted Calabi-Yau category, then $G$ is a right Serre functor in the sense of Bondal-Kapranov \cite{BK}.

 In the special case when $\cat$ is a category with a single object, $*$, then being an $(F,G)$-twisted Calabi-Yau category is equivalent to having a $\kk$-linear map $\mt :  \End_{\cat}(*) \to \kk$ which satisfies $\mt (g(a)b)=\mt (af(b))$ for fixed algebra endomorphisms $f,g: \End_{\cat}(*) \to \End_{\cat}(*)$ along with the requirement the pairing $(a,b) \mapsto \mt (ab)$ be nondegenerate.  In this way it generalizes the well known fact a Calabi-Yau structure on a category with a single object is equivalent to the notion of a symmetric Frobenius algebra.

\subsection{A twisted Calabi-Yau structure on $\Proj (\cat )$}
In this section we assume $\kk$ is an algebraically closed field and $\cat$ is a locally-finite, pivotal, $\kk$-tensor category with enough projectives.

If $X$ is an fixed object of $\cat$, then we write $F_{X}$ for the endofunctor $X \otimes -$. If $\PP$ is an indecomposable projective object in $\cat$, then there is the corresponding trace tuple $(\PP,\lega, \legb,\eta, \epsilon)$ and
right 
 $(\lega, \legb)$-trace, $\mt$, on $\Proj (\cat )$ given by Theorem~\ref{T:ExistTraceTupleProj}.  Combining this with Theorem~\ref{T:nondegeneratepairing} yields the following result.  

\begin{theorem}\label{T:twistedKYonProjforlocallyfinite}  Let $\PP$ be an indecomposable projective object in $\cat$.  The corresponding trace tuple $(\PP,\lega, \legb,\eta, \epsilon)$ and  
right  
$(\lega, \legb)$-trace, $\mt$, makes $\Proj (\cat)$ into an $(F_{\lega}, F_{\legb})$-twisted Calabi-Yau category.
\end{theorem}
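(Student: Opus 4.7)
The plan is to verify the two conditions of Definition~\ref{D:twistedCYcategory} applied to the family $\{\mt_U\}_{U \in \Proj(\cat)}$ with $F = F_{\lega}$ and $G = F_{\legb}$. Both conditions turn out to be essentially packaged reformulations of results already established, so the proof reduces to unwinding definitions and invoking the right statements.

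First I would establish that $\mt$ is in fact defined on all of $\Proj(\cat)$. By Theorem~\ref{T:ExistOfabtrace}, $\mt$ is a right $(\lega, \legb)$-trace on the right ideal $\ideal_{\lega}^{\legb}$. Combining this with Lemma~\ref{L:Prinjectives} (which shows $\Proj(\cat) \subseteq \ideal_{\lega}^{\legb}$ for any trace tuple), the family $\{\mt_U : \Hom_{\cat}(\lega \otimes U, \legb \otimes U) \to \kk\}_{U \in \Proj(\cat)}$ is well-defined, which is precisely $\{\mt_U : \Hom_{\cat}(F_{\lega}(U), F_{\legb}(U)) \to \kk\}_{U \in \Proj(\cat)}$.

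Next I would verify non-degeneracy. Given $U, V \in \Proj(\cat)$, the required pairing
\[
\Hom_\cat(V, F_{\legb}(U)) \times \Hom_\cat(F_{\lega}(U), V) \to \kk, \quad (g, f) \mapsto \mt_U(gf)
\]
is exactly the pairing appearing in Theorem~\ref{T:nondegeneratepairing} with $Q = U$. Since Theorem~\ref{T:nondegeneratepairing} actually provides non-degeneracy for arbitrary $V \in \cat$, the restriction to $V \in \Proj(\cat)$ holds a fortiori.

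Finally I would verify cyclicity. Unpacking the definitions of $F_\lega$ and $F_\legb$, the cyclicity condition in Definition~\ref{D:twistedCYcategory} reads: for any $U, V \in \Proj(\cat)$, any $f : \lega \otimes V \to \legb \otimes U$, and any $g : U \to V$,
\[
\mt_V((\Id_{\legb} \otimes g) f) = \mt_U(f (\Id_{\lega} \otimes g)).
\]
This is verbatim the $(\lega, \legb)$-cyclicity property from Section~\ref{SS:traces} built into the definition of a right $(\lega, \legb)$-trace, and so holds automatically since $U, V \in \Proj(\cat) \subseteq \ideal_{\lega}^{\legb}$. Since everything follows from prior results, there is no real obstacle; the only thing to be careful about is matching the direction of the morphisms and the placement of $F_\lega$ versus $F_\legb$ correctly when translating between the two cyclicity formulations.
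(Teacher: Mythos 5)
Your proof is correct and takes essentially the same approach as the paper, which presents Theorem~\ref{T:twistedKYonProjforlocallyfinite} as an immediate consequence of Theorem~\ref{T:ExistTraceTupleProj} (giving the trace on $\Proj(\cat)$) combined with Theorem~\ref{T:nondegeneratepairing} (giving non-degeneracy). You have simply made explicit the routine unwinding of the two conditions in Definition~\ref{D:twistedCYcategory}, including the observation that the twisted cyclicity condition is verbatim the $(\lega,\legb)$-cyclicity built into a right $(\lega,\legb)$-trace.
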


As an application, if we take $P$ to be the injective hull of $\unit$ and $\legb$ is the simple head of $P$, then $\Proj (\cat)$ is an $(\Id_{\cat}, F_{\legb})$-twisted Calabi-Yau category and, hence, $F_{\legb}$ is a right Serre functor on $\Proj (\cat)$.  
We also have the following special case of the previous theorem. Recently Gainutdinov-Runkel \cite{GR} obtained the same result under the assumption $\cat$ is finite and factorisable.  

\begin{corollary}\label{T:twistedCYonProjforfinite} Assume $\cat$ is a locally-finite, pivotal, unimodular, $\kk$-tensor category with enough projectives. Then $\Proj(\cat )$ is a Calabi-Yau category.
\end{corollary}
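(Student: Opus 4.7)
The plan is to deduce this corollary directly from Theorem~\ref{T:twistedKYonProjforlocallyfinite} by making a judicious choice of indecomposable projective $\PP$. Since $\cat$ has enough projectives, let $\PP$ be the projective cover of the unit object $\unit$. By Section~\ref{SS:ProjInj}, $\PP$ is absolutely indecomposable and end-nilpotent, and by Lemma~\ref{P:Projectivetracetuple} it produces a trace tuple $(\PP, \lega, \legb, \eta, \epsilon)$ where $\legb$ is the simple head of $\PP$ and $\lega$ is the simple socle of $\PP$.

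Next I would identify $\lega$ and $\legb$ explicitly. Since $\PP$ is the projective cover of $\unit$, the head of $\PP$ is $\unit$, so $\legb \cong \unit$. The hypothesis that $\cat$ is unimodular says precisely that the socle of the projective cover of $\unit$ is isomorphic to $\unit$, so $\lega \cong \unit$ as well. Up to a compatible choice of the maps $\eta$ and $\epsilon$, we may therefore take $\lega = \legb = \unit$ in the trace tuple.

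Now apply Theorem~\ref{T:twistedKYonProjforlocallyfinite} with this trace tuple. It yields that $\Proj(\cat)$ carries the structure of an $(F_\unit, F_\unit)$-twisted Calabi-Yau category, where $F_\unit = \unit \otimes -$. Since we may work in a strict model of the pivotal category (by Mac Lane coherence, as noted in Section~\ref{S:Prelims}), $F_\unit$ is canonically isomorphic to $\Id_\cat$. Hence $\Proj(\cat)$ is an $(\Id_\cat, \Id_\cat)$-twisted Calabi-Yau category, which by definition is a Calabi-Yau category.

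There is no real obstacle here; the argument is essentially a bookkeeping exercise. The only subtlety worth a sentence in the written proof is justifying the identifications $F_\unit = \Id_\cat$ and the canonical identification of $\Hom_\cat(\unit \otimes U, \unit \otimes V)$ with $\Hom_\cat(U,V)$, so that both the non-degeneracy and cyclicity properties of Definition~\ref{D:twistedCYcategory} specialize to those of a Calabi-Yau category. This is guaranteed by working in a strict tensor category, so no further work is required.
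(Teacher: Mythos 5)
Your proof is correct and matches the paper's (implicit) argument: the corollary is stated as a special case of Theorem~\ref{T:twistedKYonProjforlocallyfinite} obtained by taking $\PP$ to be the projective cover of $\unit$, so that $\legb\cong\unit$ and, by unimodularity, $\lega\cong\unit$, whence $F_\lega\cong F_\legb\cong\Id_\cat$. Your added remark about strictness and the identification $F_\unit=\Id_\cat$ is exactly the right level of care and requires no further justification.
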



We end by noting the following generalization of the single object example from the previous section.  If $\cat$ is a finite tensor category as in \cite{EGNO}, then it has finitely many indecomposable projectives $P_{1}, \dotsc , P_{t}$ and $Q = \oplus_{i=1}^{t} P_{i}$ is a projective generator. Let $P_{0}$ be the projective cover of $\unit$ and let $\lega$ be the socle of $P_{0}$.  Then $\lega$ is invertible and, up to isomorphism, tensoring by $\lega$ permutes the indecomposable projectives.  Consequently, $F_{\lega}(Q) = \lega \otimes Q \cong Q$ and the 
right $\lega$-trace 
 on $\Proj  (\cat )$ defines a $\kk$-linear map $\mt : \End_{\cat}(Q) \to \kk$ which makes $\End_{\cat}(Q)$  a Frobenius algebra.  Moreover, the algebra endomorphism of $\End_{\cat}(Q)$ induced by the functor $F_{\lega}$ is the Nakayama automorphism.  Thus $\cat$ is equivalent to the category of finite-dimensional modules over a finite-dimensional Frobenius algebra.


\makeatletter
\makeatother

\bibliographystyle{halpha}

\end{document}